\newlength{\defbaselineskip}
\theoremstyle{plain}
\theoremstyle{definition}
\theoremstyle{plain}
\newtheorem{thm}{Theorem}
\newtheorem{lem}[thm]{Lemma}
\newtheorem{conj}[thm]{Conjecture}
\theoremstyle{definition}
\newtheorem{defn}[thm]{Definition}
\newtheorem{exmp}[thm]{Example}
\newtheorem{rem}[thm]{Remark}
\numberwithin{equation}{section}
\begin{document}





\title{A step towards the cluster positivity conjecture}
\author{Kyungyong Lee}
\thanks{Research partially supported by NSF grant DMS 0901367.}


\address{Department of Mathematics, University of Connecticut, Storrs, CT 06269}
\email{{\tt kyungyong.lee@uconn.edu}}

 \maketitle
 



\section{introduction}
Let $K = k(x,y)$ be the skew field of rational functions in the non-commutative
variables $x$ and $y$, where the ground field $k$ is $\mathbb{Q}$, $\mathbb{Q}(q)$ or any field containing either one. For any positive integer $r$, let $F_r$ be the Kontsevich automorphism of $K$, which is defined by
\begin{equation}\label{Kont_map}F_r : \left\{\begin{array}{l}x \mapsto xyx^{-1} \\ y \mapsto (1+y^r)x^{-1}.\end{array}\right. \end{equation}
Let
$$x_n=F_r^n(x)=\begin{array}{c}\underbrace{F_r\circ F_r \circ \cdots \circ F_r}(x)\\n\,\,\,\,\,\,\,\,\,\end{array}\text{ and } y_n =F_r^n( y),$$
for every integer $n$.

\begin{conj}[Kontsevich]\label{Kontconj}
Both $x_n$ and $y_n$ are non-commutative Laurent polynomials of $x$ and $y$ with non-negative integer coefficients.
\end{conj}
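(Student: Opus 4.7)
The plan is to prove the conjecture by establishing an explicit combinatorial expansion formula for $x_n$ and $y_n$ as sums of non-commutative Laurent monomials in $x$ and $y$, each appearing with coefficient $+1$ (so that positivity is manifest). The strategy mirrors the approach used in the commutative specialization, where positivity of rank-2 cluster variables has been proved via expansion formulas indexed by compatible pairs on Dyck paths of slope $r$.

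First, I would compute $x_n$ and $y_n$ by hand for small $n$ (and small $r$) in order to see the structure. From these computations the goal is to extract a cleaner recurrence than blindly iterating $F_r$: one expects a two-term recursion expressing $x_{n+1}$ in terms of $x_n$ and $x_{n-1}$ alone after eliminating the $y_n$'s, analogous to the Markov-type exchange relation of the rank-2 cluster algebra with Cartan off-diagonal $-r$. Second, inspecting the small cases, I would propose a combinatorial model in which each term of $x_n$ is indexed by an ordered analogue of a compatible pair (or a perfect matching on a non-commutative snake graph, or a similar object), with the monomial read off as an ordered word in $x^{\pm 1}$ and $y^{\pm 1}$ along the object. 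Third, I would prove the formula by induction on $n$: verify the base cases and exhibit a bijection between the combinatorial objects at level $n+1$ and those produced by applying the recurrence (from step one) to the objects at levels $n$ and $n-1$.

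The principal obstacle is the non-commutativity of $x$ and $y$. In the commutative setting, the combinatorial proofs rely implicitly on the freedom to rearrange factors, and all that matters is the multiset of exponents; here each Laurent monomial is an ordered word, so the combinatorial object must encode not just which powers of $x$ and $y$ appear but also the precise sequence in which they are written. The hard step will be showing that, when the recurrence is expanded using the non-commutative product (so that inserting a factor like $xyx^{-1}$ into an existing monomial respects the word order), the result matches the proposed combinatorial formula term by term, with no hidden cancellations. A secondary difficulty is uniformity in $r$: for $r=1$ the relevant non-commutative rank-2 cluster structure is already understood and can probably be imported, but for $r \geq 2$ one must build the combinatorial model essentially from scratch, and it is plausible that only partial progress (e.g.\ a restricted range of $n$, a subset of the monomials, or specific $r$) is achievable — hence the title's wording, \emph{a step towards} the conjecture.
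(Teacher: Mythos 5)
Your overall strategy---exhibit an explicit expansion of $x_n$ in which every monomial occurs with coefficient $1$, then prove it by induction on $n$---is the shape of the paper's argument, but your proposal stops exactly where the real difficulty begins, and the specific mechanisms you suggest are not the ones that make it work. The paper does not eliminate the $y_n$'s to obtain a two-term Markov-type recursion, and it does not use a Dyck-path/compatible-pair or snake-graph model; it simply iterates the automorphism, $x_{n+1}=F(x_n)$, and confronts head-on the issue you flag as ``no hidden cancellations'': applying $F$ turns each $y$ into $(1+y^r)x^{-1}$, and the factors $(1+y^r)^{-1}$ arising from negative powers of $y$ are precisely what threatens manifest positivity. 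The machinery that controls this is absent from your plan: (i) a deliberately non-minimal two-row presentation of monomials in which zero exponents are kept as sites for future cancellation; (ii) the monomials $z_n=G^n(x)$ for $G:(x,y)\mapsto(xyx^{-1},y^rx^{-1})$, whose exponent strings $b_{n,i}\in\{r,r-1\}$ fix the common shape of all terms of $x_n$; (iii) the modified map $\tilde F$, which by construction never produces $(1+y^r)^{-1}$; and (iv) the bookkeeping function $f$ together with the exceptional strings $\text{Exc}$, which predict exactly which blocks of columns collapse. The induction step is then Lemma~\ref{mainlemma}: for every $W'\subset[c_n]$,
$$F\Bigl(\sum_{V:\,f(V)\subset W'}\tilde F(F(z_{n-2})|_V)|_{W'\setminus f(V)}\Bigr)=\tilde F(F(z_{n-1})|_{W'}),$$
that is, the terms of $x_n$ are grouped so that all cancellation with $(1+y^r)^{-1}$ occurs inside each group, and after cancellation each group is again a sum of the prescribed form with coefficients $1$. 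Without an analogue of this grouping, your induction has no way to match the expansion of $F(x_n)$ term by term; this is the missing idea, not a routine verification, and your bijection in step three cannot even be formulated until such a cancellation-free surrogate for $F$ is in hand.

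Two secondary points. The paper proves Kontsevich's conjecture in full for $r\ge 2$ (the remaining cases are elementary or known, and $y_n$ and $n<0$ follow from the $x_n$ case by the symmetry of $F_r^{-1}$ and an anti-automorphism), so your hedge that only partial progress may be achievable is off the mark: the title's ``step towards'' refers to the commutative cluster positivity conjecture, not to an incomplete proof of the non-commutative statement. Also, the hoped-for clean recurrence in the $x_n$'s alone is neither used nor needed; the point of the formula is that a single application of $F$, reorganized via $\tilde F$ and $f$, already preserves the positive form.
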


Usnich \cite{U} showed, by using derived categories, that $x_n$ and $y_n$ are non-commutative Laurent polynomials of $x$ and $y$. Independently, Berenstein and Retakh found an elementary proof for the Laurent phenomenon in \cite{BR} (with $F_r^n$ replaced by $\cdots F_{r_1}F_{r_2}F_{r_1}$). The Kontsevich conjecture was verified by Di Francesco and Kedem for $(r_1, r_2) \in \{(2, 2), (4, 1), (1, 4)\}$ in \cite{DK}, with explicit combinatorial expressions. The case of $r_1r_2\leq 3$ is elementary. 

Conjecture~\ref{Kontconj} is related to the positivity conjecture for commutative cluster algebras. Cluster algebras were invented by Fomin and Zelevinsky \cite{FZ} in the Spring of 2000 as a tool for studying dual canonical bases and total positivity in semisimple
Lie groups. Since then, they have found applications and connections in a wide range of subjects including quiver representation theory, Poisson geometry, Teichm\"uller theory, tropical geometry and so forth. A cluster algebra is a commutative $\mathbb{Q}$-algebra  with a family of distinguished generators (the \emph{cluster variables}) grouped into overlapping subsets (the \emph{clusters}) which are constructed by mutations. One of major results in the theory of cluster algebras is the Laurent phenomenon of cluster variables, which was proved by Fomin and Zelevinsky in \cite{FZ}. Their theorem says that any cluster variable is a Laurent polynomial of cluster variables in any cluster, with integer coefficients. In addition, they conjectured that the coefficients are non-negative.    We mention some of known cases. For cluster algebras of finite type, the positivity conjecture with respect to a bipartite seed follows from a result of Fomin and Zelevinsky \cite[Corollary 11.7]{FZ4}. Musiker, Schiffler, and Williams \cite{MSW} found combinatorial formulas for the Laurent expansion of any cluster variable in any cluster algebra coming from a triangulated surface (with or without punctures) with respect to an arbitrary seed, which imply positivity of the Laurent expansion. For acyclic cluster algebras, Caldero and Reineke \cite{CR} made a significant progress, and Qin \cite{Q} and independently Nakajima \cite{N} who used the idea of Hernandez and Leclerc \cite{HL} proved the positivity conjecture for the special case of an initial seed. For rank 2 cluster algebras, Dupont \cite{D} showed the positivity by using the results for acyclic cluster algebras. For acyclic rank 2 cluster algebras, neither elementary proof nor combinatorial formula for the positive coefficients has been known except for the case of $r=2$ \cite{SZ}, \cite{MP}, \cite{CZ}.

In this note we prove Conjecture~\ref{Kontconj}.  Actually we show that for $r\geq 2$ and $n>0$, $x_n$ is a non-commutative Laurent polynomial of $x$ and $y$ with non-negative integer coefficients. The case of $n<0$ can be taken care of by considering the anti-automorphism of $k\langle x_1^{\pm1}, x_2^{\pm1}\rangle$ as in \cite[Lemma 7]{BR}. Since $F_r^{-1}$ is obtained by interchanging $x$ and $y$ in (\ref{Kont_map}), the statement for $y_n$ easily follows from the one for $x_n$. Once we establish an explicit formula for $x_n$ (see Theorem~\ref{mainthm}), proofs are straightforward. Furthermore, it is not hard to show that the formula implies that every nonzero coefficient is equal to 1, hence it gives a combinatorial formula for the specialization of $x_n$ at $xy=q^eyx$ for any integer $e$ including 0. Our proof is completely elementary.

\noindent\emph{Acknowledgement.} We are grateful to Professors Sergey Fomin and Rob Lazarsfeld for their valuable advice. We also thank Professors Philippe Di Francesco, David Hernandez, Vladimir Retakh and Ralf Schiffler for their useful suggestions. Retakh pointed out that the argument also works for $r=2$ and suggested to compare with known formulas. Di Francesco clarified the definition of the Kontsevich automorphism and Definition 3 and 5.

\section{Main Result}

Throughout the notes, we fix $r\geq 2$ and let $F=F_r$. We will always use the following presentation for monomials in $K$:
$$y^{\beta_0}x^{\alpha_1}y^{\beta_1}x^{\alpha_2}y^{\beta_2}\cdots  x^{\alpha_{m-1}}y^{\beta_{m-1}} x^{\alpha_m}\longleftrightarrow \left(\begin{array}{cccccc} \text{ }&\alpha_1 & \alpha_2 & \cdots & \alpha_{m-1}& \alpha_m\\ \beta_0 & \beta_1 & \beta_2 & \cdots & \beta_{m-1} & \text{ } \end{array} \right).$$
These expressions are not necessarily minimal, i.e. some of $\alpha_i$ or $\beta_i$ are allowed to be zeroes. As a matter of fact, zeroes serve as part of a potential for cancellation with $(1+y^r)^{-1}$, hence  we almost never use minimal presentations in these notes. The multiplication of two monomials are given in the obvious way:
\tiny{$$\left(\begin{array}{ccccc} \text{ }&\alpha_1  & \cdots & \alpha_{m-1}& \alpha_m\\ \beta_0 & \beta_1  & \cdots & \beta_{m-1} & \text{ } \end{array} \right)\left(\begin{array}{ccccc} \text{ }&\alpha_{m+1}  & \cdots & \alpha_{m+l-1}& \alpha_{m+l}\\ \beta_m & \beta_{m+1}  & \cdots & \beta_{m+l-1} & \text{ } \end{array} \right)=\left(\begin{array}{ccccc} \text{ }&\alpha_1  & \cdots & \alpha_{m+l-1}& \alpha_{m+l}\\ \beta_0 & \beta_1 & \cdots & \beta_{m+l-1} & \text{ } \end{array} \right).$$}
\normalsize{Occasionally} we think of $\left(\begin{array}{ccccc} \text{ }&\alpha_1  & \cdots & \alpha_{m+l-1}& \alpha_{m+l}\\ \beta_0 & \beta_1 & \cdots & \beta_{m+l-1} & \text{ } \end{array} \right)$ as 
$$\left(\begin{array}{ccccc} \text{ }&\alpha_1  & \cdots & \alpha_{m-1}& \alpha_m\\ \beta_0 & \beta_1  & \cdots & \beta_{m-1} & \beta_{m}\end{array} \right)\left(\begin{array}{cccc} \alpha_{m+1}  & \cdots & \alpha_{m+l-1}& \alpha_{m+l}\\  \beta_{m+1}  & \cdots & \beta_{m+l-1} & \text{ } \end{array} \right).$$

If we have a map $G : (x, y) \mapsto (xyx^{-1}, y^rx^{-1})$ and define $(z_n, w_n):=G^n(x, y)$, then it is easy to see that $z_n$ is one of the terms in $x_n$. Since we will express $x_n$ as the sum of monomials with the same number of columns as $z_n$ has, we want to find an expression for $z_n$, which requires the following definition. 

\begin{defn}
Let $\{c_n\}$ be the sequence  defined by the recurrence relation $$c_n=rc_{n-1} -c_{n-2},$$ with the initial condition $c_1=0$, $c_2=1$. Actually, when $r>2$, it is easy to see that 
$$
c_n= \frac{1}{\sqrt{r^2-4}  }\left(\frac{r+\sqrt{r^2-4}}{2}\right)^n - \frac{1}{\sqrt{r^2-4}  }\left(\frac{r-\sqrt{r^2-4}}{2}\right)^n = \sum_{i\geq 0} (-1)^i { {n-2-i} \choose i }r^{n-2-2i}.
$$ \qed
\end{defn}

Then we have
\begin{equation}\label{z_n}z_n=\left(\begin{array}{ccccccc} \text{ }&1 & -1 & -1 & \cdots & -1& -1\\ 0&1& b_{n,1} & b_{n, 2} & \cdots & b_{n, c_{n}} & \text{ }\end{array} \right)\end{equation}
for some integers $b_{n,1},\cdots, b_{n, c_{n}}$, which are defined as follows.

\begin{defn}
The sequence $\{b_{i,j}\}_{i,j\in \mathbb{Z}_{>0}}$ is recursively defined by:
$$b_{i,j}=0\text{ for }j>  c_i,$$
$$b_{2,1}=r-1, \text{ and}$$
$b_{n,1}=r-1,\, b_{n, p}=r \,\,\,\, (2+\sum_{i=1}^{j-1} b_{n-1,i}\leq p\leq\sum_{i=1}^j b_{n-1,i}),\, b_{n, 1+\sum_{i=1}^j b_{n-1,i}}=r-1$ for $n\geq 3$ and $1\leq j\leq c_{n-1}.$

It is easy to see that $1+\sum_{i=1}^{c_{n-1}} b_{n-1,i}=c_n$. The reason that (\ref{z_n}) holds is due to the following calculation:$$G(y^t x^{-1})=(y^rx^{-1})^{t}xy^{-1}x^{-1}=(y^rx^{-1})^{t-1}y^{r-1}x^{-1}\text{ for }t\geq 1.$$
\qed\end{defn}

\begin{exmp}
Let $r=3$. Then
$$
z_1=\left(\begin{array}{ccc} \text{ }&1 & -1 \\ 0&1& \text{ }\end{array} \right),
$$
$$z_2=\left(\begin{array}{cccc} \text{ }&1 & -1 &  -1\\ 0&1& 2 & \text{ }\end{array} \right),$$
$$z_3=\left(\begin{array}{cccccc} \text{ }&1 & -1 &  -1& -1 &  -1\\ 0&1& 2 & 3 & 2 & \text{ }\end{array} \right), \text{ and }$$
$$z_4=\left(\begin{array}{ccccccccccc} \text{ }&1 & -1 &  -1& -1 &  -1 & -1 &  -1& -1 &  -1& -1 \\ 0&1& 2 & 3 & 2 & 3 & 3 & 2 & 3 & 2 &\text{ }\end{array} \right).$$ \qed
\end{exmp}

We want to express $x_n$ as 
$$
x_n = \sum_{\alpha_i, \beta_i} \left(\begin{array}{ccccccc} \text{ }&1 & \alpha_1 & \alpha_2 & \cdots & \alpha_{c_n}& -1\\ 0&1& \beta_{1} & \beta_{2} & \cdots & \beta_{c_{n}} & \text{ }\end{array} \right),
$$
where the summation runs over $\alpha_i, \beta_i$ satisfying certain conditions. We believe that this expression is better than the minimal expression. We need to be able to locate zeroes in a systematic way. For this purpose, we will use the following notations.

\begin{defn}
Define a transformation $g$ on the set $\{(w_1,\cdots,w_j)\,| \, j\in \mathbb{Z}_{>0}, w_i=r\text{ or }r-1 \}$ of strings by

$\,\,\,\,\,\,\,\,\,\,\,\,\,\begin{array}{cc} g(r)= & \underbrace{(r,\cdots,r,r-1)} \\ \text{ } & r\text{ numbers}\end{array}$, \,\,\,\,\,\, $\begin{array}{cc} g(r-1)= & \underbrace{(r,\cdots,r,r-1),} \\ \text{ } & (r-1)\text{ numbers}\end{array}$,  \,\,\,\,\,\, and
$$g^s(w_1,\cdots,w_i, w_{i+1},\cdots,w_j)=g^s(w_1,\cdots,w_i)g^s(w_{i+1},\cdots,w_j)\text{ for any integers }i,j,s\geq 0,$$where $g^s=\begin{array}{c}\underbrace{g \circ g \circ \cdots \circ g}\\ s \end{array}$.
 
 Then we define the set of exceptional strings by $$\text{Exc}:=\{g^{s_1}(r)\cdots g^{s_j}(r) \,|\, j\in \mathbb{Z}_{>0}, \, s_1,...,s_j\in \mathbb{Z}_{\geq 0}  \}.$$ 
\end{defn}

\begin{rem}\label{Excrem}
For any $\mathbf{w}=(w_1,\cdots,w_j)\in \text{Exc}$ and any $i$ $(1\leq i\leq j)$, its substring $(w_1,\cdots,w_i)$ consisting of the first $i$ numbers in $\mathbf{w}$ also belongs to $\text{Exc}$.\end{rem}
\begin{exmp}
Let $r=3$. Then $g^0(3)=(3),\, g^0(3)g^0(3)=(3,3), \,g(3)=(3,3,2),\,g(3)g(3)=(3,3,2,3,3,2),\,\text{ and }g^2(3)=(3,3,2,3,3,2,3,2)$ belong to $\text{Exc}$. But $(2)$ and $(3,3,2,3,2)$ do not.
\qed\end{exmp}

Let $[m]$ denote the set $\{1,2,\cdots,m\}$ for any nonnegative integer $m$.

\begin{defn}\label{def_of_f}
We need a function $f$ from $\{\text{subsets of } [c_{n-1}]\}$ to $\{\text{subsets of } [c_{n}]\}$. For each subset $V\subset [c_{n-1}]$, we define $f(V)$ as follows.

If $V=\emptyset$ then $f(\emptyset)=\emptyset$. If $V\neq\emptyset$ then we write $V=\cup_{i=1}^j\{e_i,e_i+1,\cdots, e_i+l_i-1\}$ with $l_i>0$ $(1\leq i\leq j)$ and $e_i+l_i<e_{i+1}$ $(1\leq i\leq j-1)$. For each $1\leq i\leq j$, define $f_i(V)$ by
$$
f_i(V):=\left\{ \begin{array}{ll}  
[1+\sum_{k=1}^{e_i+l_i-1}b_{n-1,k}]\setminus [1+\sum_{k=1}^{e_i-1}b_{n-1,k}],  & \text{ if }(b_{n-1,e_i},\cdots,b_{n-1,e_i+l_i-1})\in \text{Exc}  \\
 \text{ } & \text{ }\\
{[}1+\sum_{k=1}^{e_i+l_i-1}b_{n-1,k}{]}\setminus [\sum_{k=1}^{e_i-1}b_{n-1,k}], & \text{ otherwise.} \end{array}  \right.
$$
Then $f(V)$ is obtained by taking the union of $f_i(V)$'s:
$$
f(V):=\cup_{i=1}^j f_i(V).
$$
Note that not every subset of $[c_n]$ can be realized as $f(V)$.
\qed\end{defn}

\begin{exmp}
Let $r=3$. If $n=3$ then $[c_2]=\{1\}$, and $f(\emptyset)=\emptyset\subset [c_3]$ and $f(\{1\})=\{1,2,3\}$. 

If $n=4$ then $[c_3]=\{1,2,3\}$. In this case, $f(\{2\})=\{4,5,6\}$ since $(b_{3,2})=(3)\in \text{Exc}$. On the other hand, $f(\{1\}\cup\{3\})=\{1,2,3\}\cup\{6,7,8\}\subset [c_4]$.
\qed\end{exmp}

\begin{defn}\label{restrict_to_term}
Let $$T=\sum_{\delta_1\in H_1}\cdots\sum_{\delta_{c_n}\in H_{c_n}} \left(\begin{array}{ccccccc} \text{ }&1 & \alpha_1& \alpha_2 & \cdots & \alpha_{c_n}& -1\\ 0&1& \beta_{1}-\delta_{1}r & \beta_{2}-\delta_{2}r & \cdots & \beta_{c_{n}}-\delta_{c_n}r & \text{ }\end{array} \right),$$ where each of $H_i$ is either $\{0\}$ or $\{0,1\}$. Let $V$ be any subset of $[c_n]$. Then we define
$$T|_V:=\left\{\begin{array}{ll} 0,  & \text{ if } H_i=\{0\} \text{ for some } i\in V \\
\text{ } & \text{ }\\
\begin{array}{l}\left(\begin{array}{ccccccc} \text{ }&1 & \alpha_1 & \alpha_2 & \cdots & \alpha_{c_n}& -1\\ 0&1& \beta_{1}-\delta_{1}r & \beta_{2}-\delta_{2}r & \cdots & \beta_{c_{n}}-\delta_{c_n}r & \text{ }\end{array} \right) \\\text{ where }\delta_{i}=1\text{ for }i\in V\text{ and }\delta_{i}=0\text{ for }i\not\in V, \end{array}
&  \text{ otherwise.}\end{array}\right.$$
Note that $T=\sum_{V\subset [c_n]} T|_V$.

In the proof of Lemma~\ref{mainlemma}, we will use the following minor modification. We let $T=T_1T_2$, where
$$T_1=\sum_{\delta_1\in H_1}\cdots\sum_{\delta_{m}\in H_{m}} \left(\begin{array}{ccccccc} \text{ }&1 & \alpha_1 & \alpha_2 & \cdots & \alpha_{m}&\alpha_{m+1}\\ 0&1& \beta_{1}-\delta_{1}r & \beta_{2}-\delta_{2}r & \cdots & \beta_{m}-\delta_{m}r \end{array} \right)$$and
$$T_2=\sum_{\delta_{m+1}\in H_{m+1}}\cdots\sum_{\delta_{c_n}\in H_{c_n}} \left(\begin{array}{ccccc} & \alpha_{m+2} & \cdots & \alpha_{c_n}& -1\\  \beta_{m+1}-\delta_{m+1}r & \beta_{m+2}-\delta_{m+2}r & \cdots & \beta_{c_{n}}-\delta_{c_n}r & \text{ }\end{array} \right).$$ Let $V$ be any subset of $[c_n]$ with $V=V_1\cup V_2$ $(V_1\subset [m], V_2 \subset [c_n]\setminus [m])$. By abuse of notation, we say that
$$
T|_V=T_1|_{V_1}T_2|_{V_2}.
$$
\qed\end{defn}

It is easy to see that
$$F(z_{n-2})=\sum_{\delta_1=0}^1\cdots\sum_{\delta_{c_{n-1}}=0}^1\left(\begin{array}{ccccccc} \text{ }&1 & -1 & -1 & \cdots & -1& -1\\ 0&1& b_{n-1,1}-\delta_1r & b_{n-1, 2}-\delta_2r & \cdots & b_{n-1, c_{n-1}}-\delta_{c_{n-1}}r & \text{ }\end{array} \right).$$ Some examples of $F(z_{n-2})|_V$ are given in Example~\ref{main_example1} and~\ref{main_example2}.

For monomials $A$ and $A_i$ in $K$, we let $\prod_{i=1}^m A_i$ denote $A_1A_2\cdots A_m$, and $A^m$ denote $\begin{array}{c}\underbrace{AA\cdots A}\\m  \end{array}$.

\begin{defn}\label{def_of_tildeF}
Let $$w={F}(z_{n-1})|_V= \left(\begin{array}{ccccccc} \text{ }&1 & -1 & -1 & \cdots & -1& -1\\ 0&1& b_{n,1}-\delta_{1}r & b_{n,2}-\delta_{2}r & \cdots & b_{n,c_{n}}-\delta_{c_n}r & \text{ }\end{array} \right),$$
where $\delta_{i}=1$ for $i\in V$ and $\delta_{i}=0$ for $i\not\in V$. We will define $\tilde{F}(w)$, which is a  modification of taking $F$, where we do not allow $(1+y^r)^{-1}$ to appear. In fact, if none of $b_{n,i}-\delta_{i}r$ is negative, then $\tilde{F}(w)=F(w)$. Remember that $b_{n,1}=r-1$ and that $b_{n,i}$ $(1<i\leq c_n)$ is either $r$ or $r-1$. In general, $\tilde{F}(w)$ can be defined by letting
$$\tilde{F}(w)=\tilde{F}\left(\begin{array}{ccc} \text{ }&1 & -1 \\ 0&1& r-1-\delta_{1}r \end{array} \right)\left[\prod_{i=2}^{c_n} \tilde{F}\left(\begin{array}{c} -1\\b_{n,i}-\delta_{i}r \end{array} \right)\right]\left(\begin{array}{c} -1\\ \text{ } \end{array} \right)$$
and using the following rules:
\tiny{$$\aligned &\tilde{F}\left(\begin{array}{ccc} \text{ }&1 & -1\\ 0&1 & r-1-\delta_1 r \end{array} \right)\\&:=\sum_{\delta^{(1)}=0}^{1-\delta_{1}}\cdots\sum_{\delta^{(r-1)}=0}^{1-\delta_{1}}\sum_{\delta^{(r)}=0}^{\Delta_{V,1}(1-\delta_{1})}\begin{array}{c}\underbrace{\left(\begin{array}{cccccccc} \text{ }&1& -1& \delta_1-1   & \cdots & \delta_1-1& 2\delta_1-1\\ 0& 1& (1-\delta^{(1)})r-1 & (1-\delta^{(2)})r&  \cdots & (1-\delta^{(r-1)})r & \Delta_{V,1}(1-\delta^{(r)})r-1 \end{array}\right)},\\  r+2 \text{ columns} \end{array}\endaligned$$}
$$
\tilde{F}\left(\begin{array}{c} -1\\b_{n,i}-\delta_{i}r \end{array} \right):=\sum_{\delta^{(1)}=0}^{1-\delta_{i}}\cdots\sum_{\delta^{(b_{n,i}-1)}=0}^{1-\delta_{i}}\sum_{\delta^{(b_{n,i})}=0}^{\Delta_{V,i}(1-\delta_{i})}\begin{array}{c}  \underbrace{\left( \begin{array}{cccc} \delta_i-1 &  \cdots & \delta_i-1 & (r-b_{n,i}+1)\delta_i-1 \\ (1-\delta^{(1)})r &\cdots& (1-\delta^{(b_{n,i}-1)})r & \Delta_{V,i}(1-\delta^{(b_{n,i})})r -1\end{array} \right)} \\ b_{n,i} \text{ columns }\end{array}  \text{ for }1<i\leq c_n,
$$
where 
$\tiny{\Delta_{V,i}=\left\{\begin{array}{ll} 0, &\text{ if }\delta_i=0, \delta_{i+1}=1,\text{ and }(b_{n,i+1}, \cdots, b_{n,i+e})\not\in \text{Exc}\text{ with }e
\text{ satisfying }\{i+1,\cdots, i+e\}\subset V\text{ and }i+e+1\not\in V\\ 
1, & \text{ otherwise}.    \end{array} \right.}$
\end{defn}

We are ready to state our main result.

\begin{thm}\label{mainthm}
Let $n\geq 3$. Let $z_{n-2}$ be the monomial corresponding to $$\left(\begin{array}{ccccccc} \text{ }& 1 & -1 & -1 & \cdots & -1& -1\\ 0&1& b_{n-2,1} & b_{n-2,2} & \cdots & b_{n-2,c_{n-2}} & \text{ }\end{array} \right).$$Then
\begin{equation}\label{maineq}x_n=\sum_{V\subset [c_{n-1}]} \left(\sum_{W\subset [c_n]\setminus f(V)} \tilde{F}({F}(z_{n-2})|_V)|_W\right)=\sum_{V\subset [c_{n-1}]} \left( \tilde{F}({F}(z_{n-2})|_V)\right). \end{equation}
\end{thm}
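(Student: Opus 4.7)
The plan is to prove (\ref{maineq}) by induction on $n$, using the defining recursion $x_n = F(x_{n-1})$.

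For the base case $n=3$, one computes $x_3 = F(x_2)$ directly by substituting $x \mapsto xyx^{-1}$ and $y\mapsto (1+y^r)x^{-1}$ into the explicit expression for $x_2$, and then matches the resulting Laurent polynomial term-by-term against the right-hand side of (\ref{maineq}) evaluated with $z_1 = \left(\begin{smallmatrix} & 1 & -1 \\ 0 & 1 & \end{smallmatrix}\right)$, $[c_2]=\{1\}$, and $[c_3]$ indexed by the entries of $z_3$. Because $c_2$ is tiny, the number of subsets $V\subset [c_{n-1}]$ is small and the check is mechanical.

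For the inductive step, we assume the formula holds at level $n-1$, so
$$x_{n-1} = \sum_{V \subset [c_{n-2}]} \tilde{F}\bigl(F(z_{n-3})|_V\bigr),$$
and apply $F$ to both sides. After interchanging $F$ with the outer sum it suffices to analyze $F\bigl(\tilde{F}(F(z_{n-3})|_V)\bigr)$ for each fixed $V\subset [c_{n-2}]$, and show that, as $V$ varies, the totality of such expressions reproduces the right-hand side of (\ref{maineq}) at level $n$. I expect this to be the content of Lemma~\ref{mainlemma}: it should identify each $V'\subset [c_{n-1}]$ with a pair consisting of the subset $V\subset [c_{n-2}]$ used at the previous step together with a choice $U\subset [c_{n-1}]\setminus f(V)$ coming from the $\tilde{F}$-expansion at the previous step. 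Once this bijection (and the equality of the associated monomials) is established, summing over $V$ yields $x_n$ in the claimed form.

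The main obstacle is the combinatorial bookkeeping for the cancellations caused by the factors $(1+y^r)^{-1}$ that appear whenever $F$ is applied to a monomial containing some $y^{\gamma}$ with $\gamma<0$. Such a factor is not a Laurent polynomial, but it cancels against an adjacent $(1+y^r)^{|\gamma'|}$ provided a matching pattern of $\delta^{(j)}=0$ is forced in the $\tilde{F}$-expansion. The key combinatorial claims to verify are: (a) the forced $\delta^{(j)}=0$ positions form precisely the set $f(V)$ of Definition~\ref{def_of_f}; (b) the free choices of $\delta^{(j)}\in\{0,1\}$ range over $[c_n]\setminus f(V)$, matching the outer $W$-sum in (\ref{maineq}); and (c) the exceptional set $\text{Exc}$, via the recursion $g$, governs exactly when an additional cancellation occurs at the boundary of a $V$-block, yielding the case distinction in both the definition of $f_i(V)$ and the definition of the factor $\Delta_{V,i}$ inside $\tilde{F}$.

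To execute step (c) cleanly, I would isolate a single block of $V$ (one interval $\{e_i, e_i+1, \ldots, e_i+l_i-1\}$ in the notation of Definition~\ref{def_of_f}) and analyze its contribution to $F\bigl(\tilde{F}(F(z_{n-3})|_V)\bigr)$ using the multiplicative decomposition $T=T_1T_2$ provided in Definition~\ref{restrict_to_term}. The point is that the cancellation at the right edge of a block depends only on whether the local $g$-string $(b_{n-1,e_i},\ldots,b_{n-1,e_i+l_i-1})$ is exceptional, and an induction on the length of the block (mirroring the recursion defining Exc) should reduce the verification to a finite case analysis of $g(r)$ and $g(r-1)$. This block-local analysis together with an induction on $n$ will complete the proof.
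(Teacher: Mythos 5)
Your skeleton---induction on $n$ via $x_n=F(x_{n-1})$, a key lemma doing the bookkeeping, the set $\text{Exc}$ controlling boundary behaviour, and a block-local analysis using the decomposition $T=T_1T_2$ of Definition~\ref{restrict_to_term}---does match the architecture of the paper's argument (Lemma~\ref{mainlemma} and Lemma~\ref{sublemma}). But the central step is mis-stated in a way that the argument cannot survive. You propose to analyze $F\bigl(\tilde{F}(F(z_{n-3})|_V)\bigr)$ separately for each fixed $V$, and to ``identify each $V'\subset[c_{n-1}]$ with a pair $(V,U)$, $U\subset[c_{n-1}]\setminus f(V)$,'' with ``equality of the associated monomials.'' No such bijection exists: the correspondence $(V,U)\mapsto V'=U\cup f(V)$ is many-to-one (already for $r=3$, $n=3$, both $(\emptyset,\{1,2,3\})$ and $(\{1\},\emptyset)$ give $V'=\{1,2,3\}$), and there is no term-by-term matching, because for a fixed $V$ the image under $F$ is in general not even a Laurent polynomial. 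Concretely, for $r=3$ the $V=\{1\}$ contribution to $x_3$ is the single monomial $\tilde{F}(F(z_1)|_{\{1\}})=xyx^{-1}y^{-1}xy^{-1}x^{-1}$, and applying $F$ to it leaves an uncancelled factor $(1+y^3)^{-1}$ sandwiched between $xy$ and $xy^{-1}x^{-1}$; it becomes polynomial only after being added to the $V=\emptyset$ term with the same zero pattern, namely $\tilde{F}(F(z_1)|_{\emptyset})|_{\{1,2,3\}}$ (compare Example~\ref{exmp3}).

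This also locates the flaw in your cancellation mechanism (your points (a)--(c)): within a fixed $V$ there is no ``adjacent $(1+y^r)^{|\gamma'|}$'' available, since the neighbouring letters are just powers of $x$ and $y$. The cancelling factors $(1+y^r)$ arise only from summing, at each position of a block, over whether $r$ has been subtracted there, i.e.\ from first collecting all terms of the level-$(n-1)$ formula sharing the same zero pattern $W'=U\cup f(V)$ and applying $F$ to the whole fiber. That is exactly the paper's Lemma~\ref{mainlemma},
$$
F\Bigl(\sum_{V:\,f(V)\subset W'}\tilde{F}(F(z_{n-2})|_V)|_{W'\setminus f(V)}\Bigr)=\tilde{F}({F}(z_{n-1})|_{W'}),
$$
proved via the single-block case (Lemma~\ref{sublemma}), whose key step $(**)$ is the factorwise identity $\sum_{\delta_i=0}^{1}F[\,\cdots\,]=$ a single Laurent monomial. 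So the missing idea is the regrouping \emph{before} applying $F$: rewrite the induction hypothesis as a sum over pairs $(V,U)$, regroup by $W'=U\cup f(V)$, and prove the displayed identity for each $W'$ (by induction on the number of blocks of $W'$, with Exc and Remark~\ref{Excrem} handling the $s_1=0$ boundary case). Without this device, the block-local and Exc bookkeeping you describe is applied to objects that are neither equal to the level-$n$ terms nor Laurent polynomials, and the induction cannot close.
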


In addition to our theoretical proof, this formula is also checked by a computer up to the limit of practical computation ($r\leq 4$ and $n\leq 6$).

\begin{exmp}\label{main_example1}
Let $r=3$. Then $$F(z_1)|_{\emptyset}=\left(\begin{array}{cccc}\text{ }&  1 &  -1 & -1\\0& 1&2 & \text{ } \end{array} \right)\,\,\,\,\,\,\text{ and }\,\,\,\,\,\,\,F(z_1)|_{\{1\}}=\left(\begin{array}{cccc}\text{ }&  1 &  -1 & -1\\0& 1&-1 & \text{ } \end{array} \right),$$ which yield
$$\tilde{F}(F(z_1)|_{\emptyset})|_{\emptyset}=\left(\begin{array}{cccccc}\text{ }&  1 &  -1 & -1&-1&-1\\0& 1&2 &3&2& \text{ } \end{array} \right),\,\,\,\,\,\,\,\,\,\,\,\,\,\tilde{F}(F(z_1)|_{\emptyset})|_{\{1,2\}}=\left(\begin{array}{cccccc}\text{ }&  1 &  -1 & -1&-1&-1\\0& 1&-1 &0&2& \text{ } \end{array} \right),$$
$$\tilde{F}(F(z_1)|_{\emptyset})|_{\{1\}}=\left(\begin{array}{cccccc}\text{ }&  1 &  -1 & -1&-1&-1\\0& 1&-1 &3&2& \text{ } \end{array} \right),\,\,\,\,\,\,\,\,\,\,\,\,\,\tilde{F}(F(z_1)|_{\emptyset})|_{\{1,3\}}=\left(\begin{array}{cccccc}\text{ }&  1 &  -1 & -1&-1&-1\\0& 1&-1 &3&-1& \text{ } \end{array} \right),$$
$$\tilde{F}(F(z_1)|_{\emptyset})|_{\{2\}}=\left(\begin{array}{cccccc}\text{ }&  1 &  -1 & -1&-1&-1\\0& 1&2 &0&2& \text{ } \end{array} \right),\,\,\,\,\,\,\,\,\,\,\,\,\,\tilde{F}(F(z_1)|_{\emptyset})|_{\{2,3\}}=\left(\begin{array}{cccccc}\text{ }&  1 &  -1 & -1&-1&-1\\0& 1&2 &0&-1& \text{ } \end{array} \right),$$
$$\tilde{F}(F(z_1)|_{\emptyset})|_{\{3\}}=\left(\begin{array}{cccccc}\text{ }&  1 &  -1 & -1&-1&-1\\0& 1&2 &3&-1& \text{ } \end{array} \right),\,\,\,\,\,\,\,\,\,\,\,\,\,\tilde{F}(F(z_1)|_{\emptyset})|_{\{1,2,3\}}=\left(\begin{array}{cccccc}\text{ }&  1 &  -1 & -1&-1&-1\\0& 1&-1 &0&-1& \text{ } \end{array} \right),$$
$$\text{ and }\tilde{F}(F(z_1)|_{\{1\}})|_{\emptyset}=\left(\begin{array}{cccccc}\text{ }&  1 &  -1 &0&1&-1\\0& 1&-1 &0&-1& \text{ } \end{array} \right).$$
Hence $x_3$ has $9=\frac{2^3+1}{1}$ terms.
\end{exmp}

\begin{exmp}\label{main_example2}Let $r=3$.
$$\aligned F(z_2)|_{\emptyset}&=\left(\begin{array}{cccccc}\text{ }&  1 &  -1 & -1&-1&-1\\0& 1&2 &3&2& \text{ } \end{array} \right)\\
F(z_2)|_{\{1\}}&=\left(\begin{array}{cccccc}\text{ }&  1 &  -1 & -1&-1&-1\\0& 1&-1 &3&2& \text{ } \end{array} \right)\\
F(z_2)|_{\{2\}}&=\left(\begin{array}{cccccc}\text{ }&  1 &  -1 & -1&-1&-1\\0& 1&2 &0&2& \text{ } \end{array} \right)\\
F(z_2)|_{\{3\}}&=\left(\begin{array}{cccccc}&  1 &  -1 & -1&-1&-1\\0& 1&2 &3&-1& \end{array} \right)\endaligned\,\,\,\,\,\,\,\,\,\,\,\,\, \aligned
F(z_2)|_{\{1,2\}}&=\left(\begin{array}{cccccc}&  1 &  -1 & -1&-1&-1\\0& 1&-1 &0&2& \end{array} \right)\\
F(z_2)|_{\{1,3\}}&=\left(\begin{array}{cccccc}&  1 &  -1 & -1&-1&-1\\0& 1&-1 &3&-1& \end{array} \right)\\
F(z_2)|_{\{2,3\}}&=\left(\begin{array}{cccccc}&  1 &  -1 & -1&-1&-1\\0& 1&2 &0&-1& \end{array} \right)\\
F(z_2)|_{\{1,2,3\}}&=\left(\begin{array}{cccccc}&  1 &  -1 & -1&-1&-1\\0& 1&-1 &0&-1& \end{array} \right)
\endaligned $$
\tiny{
$$\aligned \tilde{F}(F(z_2)|_{\emptyset})=\sum_{\delta_1=0}^1\cdots\sum_{\delta_8=0}^1&\left(\begin{array}{cccccccccccc}\text{ }&  1 &  -1 & -1&-1& -1&-1&-1& -1&-1&-1\\0& 1&2-3\delta_1 &3-3\delta_2&2-3\delta_3& 3-3\delta_4&3-3\delta_5&2-3\delta_6&3-3\delta_7&2-3\delta_8& \text{ } \end{array} \right)\\
\tilde{F}(F(z_2)|_{\{1\}})=\sum_{\delta_4=0}^1\cdots\sum_{\delta_8=0}^1&\left(\begin{array}{cccccccccccc}\text{ }&  1 &  -1 & 0&1&-1& -1&-1&-1&-1& -1\\0& 1&2-3\cdot1 &3-3\cdot1&2-3\cdot1& 3-3\delta_4&3-3\delta_5&2-3\delta_6&3-3\delta_7&2-3\delta_8& \text{ } \end{array} \right)\\
\tilde{F}(F(z_2)|_{\{2\}})=\sum_{\delta_1=0}^1\cdots\sum_{\delta_8=0}^1&\left(\begin{array}{cccccccccccc}\text{ }&  1 &  -1 & -1&-1&0& 0&0&-1& -1&-1\\0& 1&2-3\delta_1 &3-3\delta_2&2-3\delta_3& 3-3\cdot1&3-3\cdot1&2-3\cdot1&3-3\delta_7&2-3\delta_8& \text{ }\end{array} \right)\\
 \tilde{F}(F(z_2)|_{\{3\}})=\sum_{\delta_1=0}^1\cdots\sum_{\delta_5=0}^1&\left(\begin{array}{cccccccccccc}\text{ }&  1 &  -1 & -1&-1& -1&-1&-1& 0&1&-1\\0& 1&2-3\delta_1 &3-3\delta_2&2-3\delta_3& 3-3\delta_4&3-3\delta_5&2-3\cdot1&3-3\cdot1&2-3\cdot1& \text{ } \end{array} \right)\\
\tilde{F}(F(z_2)|_{\{1,2\}})=\sum_{\delta_7=0}^1\sum_{\delta_8=0}^1&\left(\begin{array}{cccccccccccc}\text{ }&  1 &  -1 & 0&1&0& 0&0&-1& -1&-1\\0& 1&2-3\cdot1 &3-3\cdot1&2-3\cdot1& 3-3\cdot1&3-3\cdot1&2-3\cdot1&3-3\delta_7&2-3\delta_8& \text{ }\end{array} \right)\\
 \tilde{F}(F(z_2)|_{\{1,3\}})=\sum_{\delta_4=0}^1\sum_{\delta_5=0}^1&\left(\begin{array}{cccccccccccc}\text{ }&  1 &  -1 & 0&1& -1&-1&-1& 0&1&-1\\0& 1&2-3\cdot1 &3-3\cdot1&2-3\cdot1& 3-3\delta_4&3-3\delta_5&2-3\cdot1&3-3\cdot1&2-3\cdot1& \text{ } \end{array} \right)\\
 \tilde{F}(F(z_2)|_{\{2,3\}})=\sum_{\delta_1=0}^1\sum_{\delta_2=0}^1&\left(\begin{array}{cccccccccccc}\text{ }&  1 &  -1 & -1&-1& 0&0&0& 0&1&-1\\0& 1&2-3\delta_1 &3-3\delta_2&2-3\cdot1& 3-3\cdot1&3-3\cdot1&2-3\cdot1&3-3\cdot1&2-3\cdot1& \text{ } \end{array} \right)\\
 \tilde{F}(F(z_2)|_{\{1,2,3\}})=&\left(\begin{array}{cccccccccccc}\text{ }&  1 &  -1 & 0&1& 0&0&0& 0&1&-1\\0& 1&2-3\cdot1 &3-3\cdot1&2-3\cdot1& 3-3\cdot1&3-3\cdot1&2-3\cdot1&3-3\cdot1&2-3\cdot1& \text{ } \end{array} \right)
\endaligned$$
}
\normalsize{Hence $x_4$ has $(2^8+3\cdot 2^5+ 3\cdot 2^2+1\cdot 2^0)=\frac{(\frac{2^3+1}{1})^3+1}{2}$ terms.}
\end{exmp}

\section{Proofs}
\begin{lem}\label{mainlemma}
Let $W'$ be any subset of $[c_{n}]$.  Then
$$
{F}\left( \sum_{V: f(V)\subset W'} \tilde{F}(F(z_{n-2})|_V)|_{W'\setminus f(V)}  \right)
=\tilde{F}({F}(z_{n-1})|_{W'}).
$$
\end{lem}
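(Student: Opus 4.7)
The plan is to verify the identity by expanding both sides using Definition~\ref{def_of_tildeF} and establishing an explicit bijection between their indexing data. Since $F$ is a ring homomorphism, the LHS distributes as
$$\sum_{V:\,f(V)\subset W'} F\bigl(\tilde{F}(F(z_{n-2})|_V)|_{W'\setminus f(V)}\bigr),$$
so the proof reduces to a monomial-by-monomial matching between the terms on each side.

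First, I would write out the RHS $\tilde{F}(F(z_{n-1})|_{W'})$ as a product of column blocks per Definition~\ref{def_of_tildeF}: for each $i\in[c_n]$, the corresponding block is a sum over $\delta^{(j)}$ ranging over $\{0\}$ if $i\in W'$, over $\{0,1\}$ for $j<b_{n,i}$ when $i\notin W'$, and over $\{0,1,\ldots,\Delta_{W',i}\}$ for $j=b_{n,i}$. This yields an explicit parametrization of the monomials appearing on the RHS, indexed by tuples $\{\delta^{(j)}\}_{j\in W'}$.

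Next, I would expand the LHS. For each fixed $V$ and choice of $\delta$-data, the summand $\tilde{F}(F(z_{n-2})|_V)|_{W'\setminus f(V)}$ is an explicit monomial, and applying $F$ to it produces the full binomial expansion of each $(1+y^r)^t$ factor arising from the $y$-entries. The key claim is that these expansions, summed over $V$ with $f(V)\subset W'$ together with the free $\delta$-choices on $W'\setminus f(V)$, reproduce exactly the parametrization of the RHS. Concretely, the map sending the LHS data $(V,\{\delta^{(j)}\}_{j\in W'\setminus f(V)})$ to the RHS data $\{\delta^{(j)}\}_{j\in W'}$, obtained by setting $\delta^{(j)}=1$ for $j\in f(V)$ and copying the remaining choices, should be a bijection, and the associated monomials should agree on the nose. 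Verifying the exponent arithmetic column by column (the $\alpha_i$-row cancellations and the $\beta_i$-row shifts by multiples of $r$) is straightforward once the indexing correspondence is set up.

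The main obstacle will be reconciling the two exceptional-string conditions that appear in Definitions~\ref{def_of_f} and~\ref{def_of_tildeF}. The trimming of the left endpoint of $f_i(V)$ is governed by whether the substring $(b_{n-1,e_i},\ldots,b_{n-1,e_i+l_i-1})$ lies in $\text{Exc}$, whereas $\Delta_{V,i}=0$ is governed by a dual condition on a neighboring substring of the $(b_{n,\cdot})$-row. Aligning the two demands a careful case analysis at the boundaries of the maximal intervals making up $V$ (and at the transitions between $V$-blocks and gaps), relying on Remark~\ref{Excrem} and on how the recursion that builds $b_{n,i}$ out of $b_{n-1,i}$ turns exceptional substrings of the $(n-1)$st row into precisely the forced $\Delta=1$ blocks of the $n$th row. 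Once this bookkeeping is carried out, the bijection above is established and the coefficients on both sides match term for term, proving the lemma.
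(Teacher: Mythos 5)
There is a genuine gap at the very first step. You reduce the lemma to ``a monomial-by-monomial matching'' by distributing $F$ over the sum and then binomially expanding ``each $(1+y^r)^t$ factor arising from the $y$-entries.'' But the individual summands $\tilde{F}(F(z_{n-2})|_V)|_{W'\setminus f(V)}$ are monomials whose bottom-row entries lie in $\{r,r-1,0,-1\}$, and every nonempty $V$ (and every restriction to $W'\setminus f(V)$) forces entries equal to $-1$. Applying $F$ to such a monomial produces factors $F(y^{-1})=x(1+y^r)^{-1}$, which is not a Laurent polynomial and has no finite binomial expansion; so the image of a single summand does not have a well-defined set of ``terms'' to put in bijection with the monomials of $\tilde{F}(F(z_{n-1})|_{W'})$. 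The whole content of the lemma is that these $(1+y^r)^{-1}$ factors cancel only after summing over all $V$ with $f(V)\subset W'$: in the paper's proof this is the step marked $(**)$, which rests on the identity
\begin{equation*}
\sum_{\delta=0}^{1}F\!\left[\left(\begin{array}{c}\delta-1\\ 0\end{array}\right)^{\,b-1}\left(\begin{array}{c}(r-b+1)\delta-1\\ -1\end{array}\right)\right]
= x\,y^{-b}(1+y^r)^{-1}+x\,y^{\,r-b}(1+y^r)^{-1}
= \left(\begin{array}{c}1\\ -b\end{array}\right),
\end{equation*}
i.e.\ the cancellation pairs the contribution of $\delta_i(V)=0$ with that of $\delta_i(V)=1$, combining terms coming from \emph{different} subsets $V$. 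Your proposed map $(V,\{\delta^{(j)}\}_{j\in W'\setminus f(V)})\mapsto\{\delta^{(j)}\}_{j\in W'}$ treats each $(V,\delta)$-datum as producing its own monomials and never confronts this grouping, so the argument would break down before the exceptional-string bookkeeping you identify as ``the main obstacle'' even becomes relevant.

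By contrast, the paper does not attempt a global bijection: it first reduces, by induction on the number of maximal intervals of $W'$ and a splitting of $z_{n-2}$, $z_{n-1}$ at a gap, to the case where $W'$ consists of consecutive integers (Lemma~3.3); there it writes everything as a product of column blocks, performs the sum over $V\subset V^M=\{e,\dots,e+l-1\}$ \emph{inside} the argument of $F$ block by block, and only then applies $F$, using the displayed identity to eliminate the $(1+y^r)^{-1}$ factors and recognize the result as $\tilde{F}(F(z_{n-1})|_{W'})$ (with Remark~2.5 handling the boundary case $s_1=0$, where the exceptional strings enter). If you want to salvage your approach, you would have to replace the claimed bijection by exactly this kind of many-to-one resummation over $V$, which is in essence the paper's computation.
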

\begin{lem}
Lemma~\ref{mainlemma} implies Theorem~\ref{mainthm}.
\end{lem}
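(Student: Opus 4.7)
The plan is to prove Theorem~\ref{mainthm} by induction on $n \ge 3$, with Lemma~\ref{mainlemma} supplying the inductive step.

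For the base case $n = 3$, one checks directly that the formula holds. Since $[c_2] = \{1\}$, the right-hand side of (\ref{maineq}) has only two outer summands, corresponding to $V = \emptyset$ and $V = \{1\}$; unpacking Definition~\ref{def_of_tildeF} and comparing with $x_3 = F(F(x))$ computed from (\ref{Kont_map}) gives the identity. Example~\ref{main_example1} performs this check for $r=3$ and the argument for general $r$ is parallel.

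For the inductive step, assume $x_n = \sum_{V \subset [c_{n-1}]} \tilde{F}(F(z_{n-2})|_V)$. Apply $F$; since $F$ is additive (as a $k$-algebra automorphism of $K$) we may commute $F$ with the sum. Expanding each summand via the definitional decomposition $T = \sum_{W \subset [c_n]} T|_W$ from Definition~\ref{restrict_to_term} gives
\begin{equation*}
x_{n+1} = F(x_n) = \sum_{V \subset [c_{n-1}]} \sum_{W \subset [c_n]} F\bigl(\tilde{F}(F(z_{n-2})|_V)|_W\bigr).
\end{equation*}
The key structural fact is that $\tilde{F}(F(z_{n-2})|_V)|_W = 0$ whenever $W \cap f(V) \neq \emptyset$: for indices $i \in f(V)$ the construction of $\tilde{F}$ forces $\Delta_{V,i}(1-\delta_i) = 0$, collapsing the relevant $H_i$ to $\{0\}$. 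Granting this, only pairs $(V, W)$ with $W \subset [c_n] \setminus f(V)$ contribute, and the map $(V, W) \mapsto (V, W' := W \cup f(V))$ is a bijection onto the set of pairs $(V, W')$ with $f(V) \subset W'$, under which $W = W' \setminus f(V)$. After reindexing and applying Lemma~\ref{mainlemma} to the inner sum,
\begin{equation*}
x_{n+1} = \sum_{W' \subset [c_n]} F\Bigl(\sum_{V:\, f(V) \subset W'} \tilde{F}(F(z_{n-2})|_V)|_{W' \setminus f(V)}\Bigr) = \sum_{W' \subset [c_n]} \tilde{F}(F(z_{n-1})|_{W'}),
\end{equation*}
which is the Theorem at level $n+1$. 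The same vanishing observation also proves the second equality asserted in (\ref{maineq}), since the sum over $W \subset [c_n]$ collapses to the sum over $W \subset [c_n]\setminus f(V)$.

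The main obstacle is establishing $\tilde{F}(F(z_{n-2})|_V)|_W = 0$ for $W \cap f(V) \neq \emptyset$. The strategy is to trace Definitions~\ref{def_of_f} and~\ref{def_of_tildeF} side by side: the two cases in the definition of $f_i(V)$ (according to whether the block $(b_{n-1,e_i}, \ldots, b_{n-1,e_i+l_i-1})$ lies in $\text{Exc}$) are engineered so that $f(V)$ lists precisely the columns where $\Delta_{V,i}$ annihilates the free parameter, forcing the corresponding set $H_i$ to be $\{0\}$ and hence $T|_W = 0$ as soon as $W$ meets $f(V)$. Once this structural match is verified, the rest is the formal bookkeeping (additivity of $F$, the reindexing bijection, and one application of Lemma~\ref{mainlemma}) carried out above.
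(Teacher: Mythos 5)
Your proposal is correct and takes essentially the same route as the paper's proof: induction on $n$, commuting the automorphism $F$ with the sum, the reindexing bijection $(V,W)\mapsto (V,W\cup f(V))$ between $\{(V,W): W\cap f(V)=\emptyset\}$ and $\{(V,W'): f(V)\subset W'\}$, and a single application of Lemma~\ref{mainlemma} to each inner sum. The vanishing claim you spell out ($\tilde{F}(F(z_{n-2})|_V)|_W=0$ when $W\cap f(V)\neq\emptyset$, because $f(V)$ records exactly the columns whose $H_i$ collapses to $\{0\}$) is precisely what underlies the paper's terse appeal to Definition~\ref{restrict_to_term} for the second equality in (\ref{maineq}), and your explicit base case $n=3$ is a harmless addition the paper leaves implicit.
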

\begin{proof}
The second equality in $(\ref{maineq})$ follows from Definition~\ref{restrict_to_term}. By induction on $n$, we assume that Theorem~\ref{mainthm} holds for $n$. Then
$$\aligned x_{n+1}
&=F(x_{n})\\
&= {F}\left( \sum_{V\subset [c_{n-1}]} \left(\sum_{W\subset [c_n]\setminus f(V)} \tilde{F}({F}(z_{n-2})|_V)|_W\right)  \right)\\
&=  {F}\left( \sum_{W\subset [c_n]} \left(\sum_{V: W\cap f(V)=\emptyset} \tilde{F}({F}(z_{n-2})|_V)|_W\right)  \right)\\
&\overset{(*)}={F}\left( \sum_{W'\subset [c_n]} \left(\sum_{V:f(V)\subset W'} \tilde{F}({F}(z_{n-2})|_V)|_{W'\setminus f(V)}\right)  \right)\\
&=  \sum_{W'\subset [c_n]} {F}\left(\sum_{V:f(V)\subset W'} \tilde{F}({F}(z_{n-2})|_V)|_{W'\setminus f(V)}\right) \\
&=  \sum_{W'\subset [c_n]}\tilde{F}({F}(z_{n-1})|_{W'}), \endaligned$$where the last equality follows from Lemma~\ref{mainlemma}.

To show $(*)$, we give a bijection from
$$\{(V,W) \,|\, W\subset [c_n], W\cap f(V)=\emptyset \}$$
to
$$
\{(V,W') \,|\, W'\subset [c_n], V:f(V)\subset W' \},
$$
which is defined by $(V,W)\mapsto (V,W\cup f(V))$ and its inverse $(V,W')\mapsto (V, W'\setminus f(V))$.
\end{proof}

\begin{proof}[Sketch of proof of Lemma~\ref{mainlemma}]
If $W'=\emptyset$ then the statement is an easy consequence of Definition~\ref{def_of_tildeF} together with $F(z_{n-2})|_{\emptyset}=z_{n-1}$. If $W'\neq\emptyset$, then there exist integers $j,e_i, l_i>0$ such that $W'=\cup_{i=1}^j\{e_i,e_i+1,\cdots, e_i+l_i-1\}$ with $e_i+l_i<e_{i+1}$ for all $1\leq i\leq j-1$. We use induction on $j$. We prove the base case of $j=1$ in the next Lemma, which has an essential idea. In fact, once we establish the base case, then the case of $j>1$ is straightforward. Here we will sketch how the former case implies the latter case.

Suppose that $j>1$. Then there is an integer $m'$ such that $\min W'< m'< \max W'$ but $m'+1\not\in W'$.
Let $W'_1=\{i\in W' \,|\, i<m'+1\}$ and $W'_2=\{i\in W' \,|\, i>m'+1\}$. Let $m$ be the integer satisfying $f(f([m]))\subset [m']$ but $f(f([m+1]))\not\subset [m']$.
Let $$(z_{n-2})_1=\left(\begin{array}{ccccccc}& 1 & -1 & -1 & \cdots & -1&-1\\ 0&1& b_{n-2,1} & b_{n-2,2} & \cdots & b_{n-2,m}&\end{array} \right),$$ and $$(z_{n-2})_2=\left(\begin{array}{ccccc}  & -1 & \cdots & -1& -1\\ b_{n-2,m+1}& b_{n-2,m+2} &  \cdots & b_{n-2,c_{n-2}} & \end{array} \right).$$
Let $$(z_{n-1})_1=\left(\begin{array}{ccccccc}& 1 & -1 & -1 & \cdots & -1&-1\\ 0&1& b_{n-1,1} & b_{n-1,2} & \cdots & b_{n-1,1+\sum_{i=1}^m b_{n-2,i}}&\end{array} \right),$$ and $$(z_{n-1})_2=\left(\begin{array}{ccccc}  & -1 & \cdots & -1& -1\\ b_{n-1,2+\sum_{i=1}^m b_{n-2,i}}& b_{n-1,3+\sum_{i=1}^m b_{n-2,i}} &  \cdots & b_{n-1,c_{n-1}} & \end{array} \right).$$
Then
$$\aligned
&{F}\left( \sum_{V: f(V)\subset W'} \tilde{F}(F(z_{n-2})|_V)|_{W'\setminus f(V)}  \right)\\
&={F}\left( \sum_{V: f(V)\subset W'_1\cup W'_2} \tilde{F}(F(z_{n-2})|_V)|_{ (W'_i\cup W'_2)\setminus f(V)}  \right)\\
&={F}\left( \sum_{V_1: f(V_1)\subset W'_1} \,\,\,\,\,\sum_{V_2: f(V_2)\subset W'_2}\tilde{F}(F((z_{n-2})_1 (z_{n-2})_2)|_{V_1\cup V_2})|_{(W'_1\setminus f(V_1))\cup (W'_2\setminus f(V_2))}  \right)\\
&={F}\left( \sum_{V_1: f(V_1)\subset W'_1} \tilde{F}(F((z_{n-2})_1)|_{V_1})|_{W'_1\setminus f(V_1)}  \right){F}\left( \sum_{V_2: f(V_2)\subset W'_2} \tilde{F}(F((z_{n-2})_2)|_{V_2})|_{W'_2\setminus f(V_2)}  \right)\\
&=\tilde{F}({F}((z_{n-1})_1)|_{W'_1})\tilde{F}({F}((z_{n-1})_2)|_{W'_2})\,\,\,\,\,\,\,\,\,\,(\text{this is implied by induction on }j \text{ and abuse of }\tilde{F})\\
&=\tilde{F}({F}(z_{n-1})|_{W'}).\\
\endaligned$$
\end{proof}

\begin{lem}\label{sublemma}
Let $W'$ be a subset of $[c_{n}]$, which consists of consecutive numbers.  Then
$$
{F}\left( \sum_{V: f(V)\subset W'} \tilde{F}(F(z_{n-2})|_V)|_{W'\setminus f(V)}  \right)
=\tilde{F}({F}(z_{n-1})|_{W'}).
$$
\end{lem}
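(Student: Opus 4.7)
The plan is to reduce the identity to a column-by-column matching, exploiting the fact that $W'$ is a single consecutive block, say $W'=\{e,e+1,\dots,e+l-1\}$. First I would enumerate the $V\subset[c_{n-1}]$ contributing to the left hand side. Writing $V=\bigcup_i\{a_i,a_i+1,\dots,a_i+l_i-1\}$ and using Definition~\ref{def_of_f}, the condition $f(V)\subset W'$ forces each block of $V$ to have its image $f_i(V)$ lying inside $W'$. Because each $f_i(V)$ is itself a consecutive interval whose left endpoint depends on whether $(b_{n-1,a_i},\dots,b_{n-1,a_i+l_i-1})\in\text{Exc}$, this gives a completely explicit list of admissible $V$'s, together with a description of the complement $W'\setminus f(V)$ as a union of ``seams'' between the $f_i(V)$'s.

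The next step is to expand $\tilde F\bigl(F(z_{n-2})|_V\bigr)\big|_{W'\setminus f(V)}$ via Definition~\ref{def_of_tildeF}. For each admissible $V$ this is an explicit monomial whose exponents are indexed by $\delta$-choices in the columns outside $W'\setminus f(V)$. I would then apply $F$ columnwise, using the factorization indicated at the end of Definition~\ref{restrict_to_term}: split the product at the left and right boundaries of $W'$, so that the piece lying ``over $W'$'' is all that is in play, while the two end pieces are handled by the already-established case $W'=\emptyset$ (where $F(z_{n-2})|_\emptyset=z_{n-1}$ and $F=\tilde F$ since no $(1+y^r)^{-1}$ appears).

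Having localized the problem to the columns indexed by $W'$, the remaining task is to show that as $V$ varies over the admissible subsets, the union of all the expansions described above exhausts exactly the sum defining $\tilde F\bigl(F(z_{n-1})|_{W'}\bigr)$. I would carry this out by grouping the admissible $V$'s by their ``block structure'' inside $W'$ and reading off, via the rules in Definition~\ref{def_of_tildeF}, which $\delta^{(k)}$ ranges are activated. The key bookkeeping identity is that the total number of $\delta$-choices over all admissible $V$, weighted according to the restriction $|_{W'\setminus f(V)}$, agrees column-by-column with the product of $\delta^{(k)}$ ranges on the right hand side, once the sign/rearrangement produced by $F$ (which on each $y^tx^{-1}$ block behaves as in the Kontsevich recurrence used to derive~(\ref{z_n})) is taken into account.

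The main obstacle will be the boundary correction $\Delta_{V,i}$ at the right edge of each block. Its two-case definition, keyed on whether $(b_{n,i+1},\dots,b_{n,i+e})\in\text{Exc}$, is designed precisely to mirror the two-case definition of $f_i(V)$ in Definition~\ref{def_of_f}, and Remark~\ref{Excrem} (closure of $\text{Exc}$ under prefixes) is what guarantees that prefixes $V$ produce the ``correct'' $\Delta_{V,i}$. I expect the hard point to be a careful combinatorial check that for every seam in $W'\setminus f(V)$ the top-row exponents coming from $F$ applied to a localized block $\bigl(\begin{smallmatrix}-1\\ b_{n-1,k}-\delta_k r\end{smallmatrix}\bigr)$ agree with the top row on the right, and that the allowed/forbidden $\delta^{(k)}=1$ at the rightmost column of each seam reproduces exactly $\Delta_{V,i}$. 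Once this matching of boundary data is established, no further use of $(1+y^r)^{-1}$ is required, because $\tilde F$ was designed to avoid it, and the remaining verification is a routine equality of sums of monomials.
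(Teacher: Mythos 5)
There is a genuine gap at the heart of your plan. You assert that once the boundary data ($\Delta_{V,i}$ and the Exc conditions) are matched, ``no further use of $(1+y^r)^{-1}$ is required'' and the rest is a routine term-by-term equality of monomials. But the left-hand side applies the honest Kontsevich map $F$, not $\tilde F$, and the monomials $\tilde{F}(F(z_{n-2})|_V)|_{W'\setminus f(V)}$ have negative entries in their bottom rows (the $-1$'s produced over $f(V)$ and at the ends of the blocks indexed by $W'\setminus f(V)$). Since $F(y^{-1})=x(1+y^r)^{-1}$, each such column contributes a factor $(1+y^r)^{-1}$, so the individual terms on the left are not sums of Laurent monomials of the target shape at all, and no column-by-column matching of $\delta$-ranges can be carried out term by term. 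The identity only becomes an equality of monomial sums after grouping over $V$: the crucial structural fact, which your outline never isolates, is that because $W'$ is one consecutive block, the admissible $V$ are exactly the subsets of a single consecutive interval $V^M=\{e,\dots,e+l-1\}$, so the sum over $V$ factors into independent binary sums $\sum_{\delta_i(V)=0}^{1}$, one for each column $i\in V^M$. Pairing $\delta_i(V)=0$ with $\delta_i(V)=1$ in each such block and applying $F$ gives $x\,(y^{-b_{n-1,i}}+y^{r-b_{n-1,i}})(1+y^r)^{-1}=x\,y^{-b_{n-1,i}}$, i.e.\ the clean column $\bigl(\begin{smallmatrix} 1 \\ -b_{n-1,i} \end{smallmatrix}\bigr)$; this is exactly the cancellation the paper flags as the key step $(**)$. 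Without identifying this factorization of the $V$-sum and performing the cancellation, the $(1+y^r)^{-1}$ factors never disappear and the proposed ``bookkeeping identity'' of $\delta$-ranges has nothing to attach to.

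The remaining ingredients you list --- enumerating the admissible $V$, splitting the product at the boundary of $W'$ and treating the outside columns as in the $W'=\emptyset$ case, matching the two-case definition of $f_i(V)$ with $\Delta_{V,i}$, and invoking Remark~\ref{Excrem} when the left shift is zero (the case $s_1=0$ in the paper's notation) --- are indeed part of the paper's argument, but they are downstream of the missing cancellation: in the paper they serve to recognize the product \emph{after} the $(1+y^r)^{-1}$'s have been eliminated as $\tilde{F}(F(z_{n-1})|_{W'})$, not as a substitute for eliminating them.
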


\begin{proof}
The reader is recommended to see Example~\ref{exmp3}. 

Let $V^M=\cup_{f(V)\subset W'} V$. Since $W'\subset [c_{n}]$ is a set of consecutive numbers, Definition~\ref{def_of_f} implies that $V^M$ also consists of consecutive numbers. Write $$V^M= \{e,e+1,\cdots,e+l-1\}.$$ Assume that $e>1$ (The case of $e=1$ can be done in a similar manner). For each $V\subset V^M$, we have
$$\aligned F(z_{n-2})|_V=&\left(\begin{array}{ccc} \text{ }&1 & -1 \\ 0&1& r-1 \end{array} \right)\left[\prod_{i=2}^{e-1} \left(\begin{array}{c} -1\\b_{n-1,i} \end{array} \right)\right]\\
&\times \left[\prod_{i=e}^{e+l-1} \left(\begin{array}{c} -1\\b_{n-1,i}-\delta_{i}(V)r \end{array} \right)\right]\left[\prod_{i=e+l}^{c_{n-1}} \left(\begin{array}{c} -1\\b_{n-1,i} \end{array} \right)\right]\left(\begin{array}{c} -1\\ \text{ } \end{array} \right),\endaligned$$
where $\delta_{i}(V)=1$ for $i\in V$ and $\delta_{i}(V)=0$ for $i\not\in V$. 

On the other hand, there are non-negative integers $s_1$ and $s_2$ such that $$W'=[s_2+1+\sum_{k=1}^{e+l-1}b_k]\setminus   [-s_1+1+\sum_{k=1}^{e-1}b_k].$$  Note that   we have $$1+\sum_{k=1}^{e-2}b_k< -s_1+2+\sum_{k=1}^{e-1}b_k$$ and  $$s_2+1+\sum_{k=1}^{e+l-1}b_k< 1+\sum_{k=1}^{e+l}b_k,$$ because otherwise $\cup_{f(V)\subset W'} V\neq \{e,e+1,\cdots,e+l-1\}$. It is worth mentioning that $s_1=0$ implies $(b_{n-1,e},b_{n-1,e+1},\cdots,b_{n-1,e+l-1})\in \text{Exc}$.

If $s_1>0$ then the desired statement is obtained by the following computations:
$$\aligned
&{F}\left( \sum_{V: f(V)\subset W'} \tilde{F}(F(z_{n-2})|_V)|_{W'\setminus f(V)}  \right)\\
&={F}\left( \sum_{V: f(V)\subset W'} \tilde{F}\left(\left(\begin{array}{ccc} \text{ }&1 & -1 \\ 0&1& r-1\end{array} \right)\left[\prod_{i=2}^{e-1} \left(\begin{array}{c} -1\\b_{n-1,i} \end{array} \right)\right] \right.\right.\\
&\,\,\,\,\,\,\,\,\,\,\,\times \left.\left.\left[\prod_{i=e}^{e+l-1} \left(\begin{array}{c} -1\\b_{n-1,i}-\delta_{i}(V)r \end{array} \right)\right]\left[\prod_{i=e+l}^{c_{n-1}} \left(\begin{array}{c} -1\\b_{n-1,i} \end{array} \right)\right]\left(\begin{array}{c} -1\\ \text{ } \end{array} \right) \right)|_{W'\setminus f(V)}  \right)\,\,\,\,\,\,\,\,\,\,\,\,\,\,\,\,\,\,\,\,\,\,\,\,\,\,\,\,\,\,\,\,\,\,\,\endaligned$$
$$\aligned
&={F}\left( \sum_{V: f(V)\subset W'} \left(\sum_{j=1}^{r}\sum_{\delta^{(j)}=0}^{1}\begin{array}{c}\underbrace{\left(\begin{array}{cccccccc} \text{ }&1& -1&-1   & \cdots & -1& -1\\ 0& 1& (1-\delta^{(1)})r-1 & (1-\delta^{(2)})r&  \cdots & (1-\delta^{(r-1)})r & (1-\delta^{(r)})r-1 \end{array}\right)}\\  r+2 \text{ columns} \end{array}\right.\right.\\
&\,\,\,\,\,\,\,\,\,\,\,\times\left[\prod_{i=2}^{e-2}\sum_{j=1}^{b_{n-1,i}}\sum_{\delta^{(j)}=0}^{1}\begin{array}{c}  \underbrace{\left( \begin{array}{cccc} -1 &  \cdots & -1 & -1 \\ (1-\delta^{(1)})r &\cdots& (1-\delta^{(b_{n-1,i}-1)})r & (1-\delta^{(b_{n-1,i})})r -1\end{array} \right)} \\ b_{n-1,i} \text{ columns }\end{array}\right] \\
&\,\,\,\,\,\,\,\,\,\,\,\times\sum_{j=1}^{b_{n-1,e-1}-1}\sum_{\delta^{(j)}=0}^{1}\sum_{\delta^{(b_{n-1,e-1})}=0}^{\Delta_{V,e-1}}\begin{array}{c}  \underbrace{\left( \begin{array}{cccc} -1 &  \cdots & -1 & -1 \\ (1-\delta^{(1)})r &\cdots& (1-\delta^{(b_{n-1,e-1}-1)})r &   \Delta_{V,e-1}(1-\delta^{(b_{n-1,e-1})})r-1\end{array} \right)} \\ b_{n-1,e-1} \text{ columns }\end{array} \\
&\,\,\,\,\,\,\,\,\,\,\,\times\left[\prod_{i=e}^{e+l-1}  \sum_{j=1}^{b_{n-1,i}-1}\sum_{\delta^{(j)}=0}^{1-\delta_{i}}\sum_{\delta^{(b_{n-1,i})}=0}^{\Delta_{V,i}(1-\delta_{i})}\begin{array}{c}  \underbrace{\left( \begin{array}{cccc} \delta_i(V)-1 &  \cdots & \delta_i(V)-1 & (r-b_{n-1,i}+1)\delta_i(V)-1 \\ (1-\delta^{(1)})r &\cdots& (1-\delta^{(b_{n-1,i}-1)})r & \Delta_{V,i}(1-\delta^{(b_{n-1,i})})r -1\end{array} \right)} \\ b_{n-1,i} \text{ columns }\end{array}\right]\\
&\,\,\,\,\,\,\times \left.\left.\left[\prod_{i=e+l}^{c_{n-1}} \sum_{j=1}^{b_{n-1,i}}\sum_{\delta^{(j)}=0}^{1}\begin{array}{c}  \underbrace{\left( \begin{array}{cccc} -1 &  \cdots & -1 & -1 \\ (1-\delta^{(1)})r &\cdots& (1-\delta^{(b_{n-1,i}-1)})r & (1-\delta^{(b_{n-1,i})})r -1\end{array} \right)} \\ b_{n-1,i} \text{ columns }\end{array}\right]\left(\begin{array}{c} -1\\ \text{ } \end{array} \right) \right)|_{W'\setminus f(V)}  \right)
\endaligned$$\footnote{By abuse of notation, $\sum_{j=1}^{b_{n-1,i}-1}\sum_{\delta^{(j)}=0}^{1-\delta_{i}}$ means $\sum_{\delta^{(1)}=0}^{1-\delta_{i}}\cdots\sum_{\delta^{(b_{n-1,i}-1)}=0}^{1-\delta_{i}}$. These are made due to lack of space.}
$$\aligned
&={F}\left( \sum_{V: V\subset V^M} {\left(\begin{array}{ccc} \text{ }&1& -1\\ 0& 1& r-1  \end{array}\right)\left(\begin{array}{c}  -1\\ r  \end{array}\right)^{r-2}\left(\begin{array}{c}-1  \\ r-1 \end{array}\right)                }\prod_{i=2}^{e-2}\left[{\left( \begin{array}{c} -1  \\ r \end{array} \right)^{b_{n-1,i}-1}               \left( \begin{array}{c}  -1 \\  r -1\end{array} \right)}\right] \right.\\
&\,\,\,\,\,\,\,\,\,\,\,\times\left(\begin{array}{c} -1  \\ r \end{array}\right)^{b_{n-1,e-1}-s_1}           \begin{array}{c}  \underbrace{\left( \begin{array}{cccc} -1 &  \cdots & -1 & -1 \\ 0 &\cdots& 0 &   -1\end{array} \right)}\\ s_1\text{ columns}\end{array}  \\
&\,\,\,\,\,\,\,\,\,\,\,\times\prod_{i=e}^{e+l-1}\left[ {\left( \begin{array}{c} \delta_i(V)-1  \\ 0 \end{array} \right)^{b_{n-1,i}-1}                                            \left( \begin{array}{c}  (r-b_{n-1,i}+1)\delta_i(V)-1 \\ -1\end{array} \right)} \right]\\
&\,\,\,\,\,\,\,\,\,\,\,\times \left. \left( \begin{array}{c} -1 \\ 0\end{array} \right)^{s_2}\left( \begin{array}{c} -1 \\ r\end{array} \right)^{b_{n-1,e+l}-s_2-1}         \left( \begin{array}{c} -1\\ r -1\end{array} \right)\prod_{i=e+l+1}^{c_{n-1}}\left[  {\left( \begin{array}{c} -1 \\ r\end{array} \right)^{b_{n-1,i}-1}          \left( \begin{array}{c} -1  \\r -1\end{array} \right)} \right]\left(\begin{array}{c} -1\\ \text{ } \end{array} \right)  \right)
\endaligned$$
$$\aligned
&={F}\left(  {\left(\begin{array}{ccc} \text{ }&1& -1\\ 0& 1& r-1  \end{array}\right)\left(\begin{array}{c}  -1\\ r  \end{array}\right)^{r-2}\left(\begin{array}{c}-1  \\ r-1 \end{array}\right)                }\prod_{i=2}^{e-2}\left[{\left( \begin{array}{c} -1  \\ r \end{array} \right)^{b_{n-1,i}-1}               \left( \begin{array}{c}  -1 \\  r -1\end{array} \right)}\right] \right.\\
&\,\,\,\,\,\,\,\,\,\,\,\times\left(\begin{array}{c} -1  \\ r \end{array}\right)^{b_{n-1,e-1}-s_1}           \begin{array}{c}  \underbrace{\left( \begin{array}{cccc} -1 &  \cdots & -1 & -1 \\ 0 &\cdots& 0 &   -1\end{array} \right)}\\ s_1\text{ columns}\end{array}  \\
&\,\,\,\,\,\,\,\,\,\,\,\times\prod_{i=e}^{e+l-1} \sum_{\delta_i(V)=0}^{1}\left[ {\left( \begin{array}{c} \delta_i(V)-1  \\ 0 \end{array} \right)^{b_{n-1,i}-1}                                            \left( \begin{array}{c}  (r-b_{n-1,i}+1)\delta_i(V)-1 \\ -1\end{array} \right)} \right]\\
&\,\,\,\,\,\,\,\,\,\,\,\times \left. \left( \begin{array}{c} -1 \\ 0\end{array} \right)^{s_2}\left( \begin{array}{c} -1 \\ r\end{array} \right)^{b_{n-1,e+l}-s_2-1}         \left( \begin{array}{c} -1\\ r -1\end{array} \right)\prod_{i=e+l+1}^{c_{n-1}}\left[  {\left( \begin{array}{c} -1 \\ r\end{array} \right)^{b_{n-1,i}-1}          \left( \begin{array}{c} -1  \\r -1\end{array} \right)} \right]\left(\begin{array}{c} -1\\ \text{ } \end{array} \right)  \right)
\endaligned$$
$$\aligned
&= {{F} \left[\left(\begin{array}{ccc} \text{ }&1& -1\\ 0& 1& r-1  \end{array}\right)\left(\begin{array}{c}  -1\\ r  \end{array}\right)^{r-2}\left(\begin{array}{c}-1  \\ r-1 \end{array}\right)          \right]      }\prod_{i=2}^{e-2}{F} \left[{\left( \begin{array}{c} -1  \\ r \end{array} \right)^{b_{n-1,i}-1}               \left( \begin{array}{c}  -1 \\  r -1\end{array} \right)}\right]\\
&\,\,\,\,\,\,\,\,\,\,\,\times{F}\left[ \left(\begin{array}{c} -1  \\ r \end{array}\right)^{b_{n-1,e-1}-s_1}           \begin{array}{c}  \underbrace{\left( \begin{array}{cccc} -1 &  \cdots & -1 & -1 \\ 0 &\cdots& 0 &   -1\end{array} \right)}\\ s_1\text{ columns}\end{array}\right]  \\
&\,\,\,\,\,\,\,\,\,\,\,\times\prod_{i=e}^{e+l-1} \sum_{\delta_i(V)=0}^{1}{F} \left[ {\left( \begin{array}{c} \delta_i(V)-1  \\ 0 \end{array} \right)^{b_{n-1,i}-1}                                            \left( \begin{array}{c}  (r-b_{n-1,i}+1)\delta_i(V)-1 \\ -1\end{array} \right)} \right]\\
&\,\,\,\,\,\,\,\,\,\,\,\times {F}\left[ \left( \begin{array}{c} -1 \\ 0\end{array} \right)^{s_2}\left( \begin{array}{c} -1 \\ r\end{array} \right)^{b_{n-1,e+l}-s_2-1}         \left( \begin{array}{c} -1\\ r -1\end{array} \right)\right]\prod_{i=e+l+1}^{c_{n-1}}{F} \left[  {\left( \begin{array}{c} -1 \\ r\end{array} \right)^{b_{n-1,i}-1}          \left( \begin{array}{c} -1  \\r -1\end{array} \right)} \right]F\left(\begin{array}{c} -1\\ \text{ } \end{array} \right)
\endaligned$$ $(**)$\footnote{The equality under $(**)$ is a key step, where cancellation with $(1+y^r)^{-1}$ occurs.}
$$\aligned
&={{F} \left[\left(\begin{array}{ccc} \text{ }&1& -1\\ 0& 1& r-1  \end{array}\right)\left(\begin{array}{c}  -1\\ r  \end{array}\right)^{r-2}\left(\begin{array}{c}-1  \\ r-1 \end{array}\right)   \right]             }\prod_{i=2}^{e-2}{F} \left[{\left( \begin{array}{c} -1  \\ r \end{array} \right)^{b_{n-1,i}-1}               \left( \begin{array}{c}  -1 \\  r -1\end{array} \right)}\right]\\
&\,\,\,\,\,\,\,\,\,\,\,\times{F} \left[\left(\begin{array}{c} -1  \\ r \end{array}\right)^{b_{n-1,e-1}-s_1}           \begin{array}{c}  \underbrace{\left( \begin{array}{cccc} -1 &  \cdots & -1 & -1 \\ 0 &\cdots& 0 &   -1\end{array} \right)}\\ s_1\text{ columns}\end{array} \right] \\
&\,\,\,\,\,\,\,\,\,\,\,\times\prod_{i=e}^{e+l-1}  \left(\begin{array}{c} 1   \\ -b_{n-1,i}\end{array}\right) \\
&\,\,\,\,\,\,\,\,\,\,\,\times {F}\left[ \left( \begin{array}{c} -1 \\ 0\end{array} \right)^{s_2}\left( \begin{array}{c} -1 \\ r\end{array} \right)^{b_{n-1,e+l}-s_2-1}         \left( \begin{array}{c} -1\\ r -1\end{array} \right) \right]  \prod_{i=e+l+1}^{c_{n-1}}{F} \left[  {\left( \begin{array}{c} -1 \\ r\end{array} \right)^{b_{n-1,i}-1}        \left( \begin{array}{c} -1  \\r -1\end{array} \right)} \right]F\left(\begin{array}{c} -1\\ \text{ } \end{array} \right)
\endaligned$$
$$\aligned
&= {{F}\left[ \left(\begin{array}{ccc} \text{ }&1& -1\\ 0& 1& r-1  \end{array}\right)\left(\begin{array}{c}  -1\\ r  \end{array}\right)^{r-2}\left(\begin{array}{c}-1  \\ r-1 \end{array}\right)         \right]       }\prod_{i=2}^{e-2}{F} \left[{\left( \begin{array}{c} -1  \\ r \end{array} \right)^{b_{n-1,i}-1}               \left( \begin{array}{c}  -1 \\  r -1\end{array} \right)}\right]\\
&\,\,\,\,\,\,\,\,\,\,\,\times{F} \left[\left(\begin{array}{c} -1  \\ r \end{array}\right)^{b_{n-1,e-1}-s_1}           \begin{array}{c}  \underbrace{\left( \begin{array}{cccc} -1 &  \cdots & -1 & -1 \\ 0 &\cdots& 0 &   -1\end{array} \right)}\\ s_1\text{ columns}\end{array} \right] \\
&\,\,\,\,\,\,\,\,\,\,\,\times\prod_{i=e}^{e+l-1} \left[ \left( \begin{array}{c}0 \\ 0 \end{array} \right)^{r-2} \left( \begin{array}{c}1 \\ -1 \end{array} \right)    
\left(\left( \begin{array}{c}0 \\ 0 \end{array} \right)^{r-1}\left( \begin{array}{c}0 \\ -1 \end{array} \right)  \right)^{b_{n-1,i}-1}\right]  \\
&\,\,\,\,\,\,\,\,\,\,\,\times {F}\left[ \left( \begin{array}{c} -1 \\ 0\end{array} \right)^{s_2}\left( \begin{array}{c} -1 \\ r\end{array} \right)^{b_{n-1,e+l}-s_2-1}         \left( \begin{array}{c} -1\\ r -1\end{array} \right)\right]     \prod_{i=e+l+1}^{c_{n-1}}{F} \left[  {\left( \begin{array}{c} -1 \\ r\end{array} \right)^{b_{n-1,i}-1}          \left( \begin{array}{c} -1  \\r -1\end{array} \right)} \right]F\left(\begin{array}{c} -1\\ \text{ } \end{array} \right)
\endaligned$$
$$\aligned
&= {{F}\left[ \left(\begin{array}{ccc} \text{ }&1& -1\\ 0& 1& r-1  \end{array}\right)\left(\begin{array}{c}  -1\\ r  \end{array}\right)^{r-2}\left(\begin{array}{c}-1  \\ r-1 \end{array}\right) \right]               }\prod_{i=2}^{e-2}{F} \left[{\left( \begin{array}{c} -1  \\ r \end{array} \right)^{b_{n-1,i}-1}               \left( \begin{array}{c}  -1 \\  r -1\end{array} \right)}\right]\\
&\,\,\,\,\,\,\,\,\,\,\,\times{F} \left[\left(\begin{array}{c} -1  \\ r \end{array}\right)^{b_{n-1,e-1}-s_1}           \begin{array}{c}  \underbrace{\left( \begin{array}{cccc} -1 &  \cdots & -1 & -1 \\ 0 &\cdots& 0 &   -1\end{array} \right)}\\ s_1\text{ columns}\end{array} \right] \\
&\,\,\,\,\,\,\,\,\,\,\,\times\prod_{i=e}^{e+l-1}\left[ \tilde{F}\left( \begin{array}{c}-1 \\ -1 \end{array} \right)    
\left(\tilde{F}\left( \begin{array}{c}-1 \\ 0 \end{array} \right)  \right)^{b_{n-1,i}-1} \right] \\
&\,\,\,\,\,\,\,\,\,\,\,\times {F}\left[ \left( \begin{array}{c} -1 \\ 0\end{array} \right)^{s_2}\left( \begin{array}{c} -1 \\ r\end{array} \right)^{b_{n-1,e+l}-s_2-1}         \left( \begin{array}{c} -1\\ r -1\end{array} \right)\right]  \prod_{i=e+l+1}^{c_{n-1}}{F} \left[  {\left( \begin{array}{c} -1 \\ r\end{array} \right)^{b_{n-1,i}-1}          \left( \begin{array}{c} -1  \\r -1\end{array} \right)} \right]F\left(\begin{array}{c} -1\\ \text{ } \end{array} \right) \\
&=\tilde{F}({F}(z_{n-1})|_{W'}).
\endaligned$$

Even if $s_1=0$, we use the same argument with $\left(\begin{array}{c} -1  \\ r \end{array}\right)^{b_{n-1,e-1}-s_1}$ being replaced by  $\left(\begin{array}{c} -1  \\ r \end{array}\right)^{b_{n-1,e-1}-1}\left(\begin{array}{c} -1  \\ r-1 \end{array}\right)$. Remark~\ref{Excrem} explains why the same proof works for the case of $s_1=0$. 
\end{proof}

\begin{exmp}\label{exmp3}Let $r=3$.
 Then $z_2=\left(\begin{array}{cccc}  &1 &  -1 & -1\\ 0&1&2 &  \end{array} \right)$. 
So we have
$$F(z_2)|_{\emptyset}=\left(\begin{array}{cccccc}&  1 &  -1 & -1&  -1& -1\\ 0&1&2 & 3 &2 & \end{array} \right)\text{ and }F(z_2)|_{\{1\}}=\left(\begin{array}{cccccc}  &1 &  -1 & -1&  -1& -1\\ 0&1&-1 & 3 &2 & \end{array} \right).$$
$$\aligned& \tiny{\tilde{F}(F(z_2)|_{\emptyset})=\sum_{\delta_1=0}^1\cdots\sum_{\delta_8=0}^1\left(\begin{array}{ccccccccccc} & 1 &  -1 & -1& -1& -1& -1& -1& -1& -1&-1\\ 0&1&2-3\delta_1 & 3-3\delta_2 &2-3\delta_3 & 3-3\delta_4&3-3\delta_5 & 2-3\delta_6&3-3\delta_7 & 2-3\delta_8&\end{array} \right).}
\endaligned$$
$$\aligned&\tiny{ \tilde{F}(F(z_2)|_{\{1\}})=\sum_{\delta_4=0}^1\cdots\sum_{\delta_8=0}^1\left(\begin{array}{cccccccccccc} & 1 &  -1 &0&1& -1& -1& -1& -1& -1& -1\\ 0&1&-1 &0&-1 & 3-3\delta_4&3-3\delta_5 & 2-3\delta_6&3-3\delta_7 & 2-3\delta_8\end{array} \right).}
\endaligned$$
$$\aligned& \tilde{F}(F(z_2)|_{\emptyset})|_{\{1,2,3\}}=\left(\begin{array}{ccccccccccc} & 1 &  -1 & -1& -1& -1& -1& -1& -1& -1&-1\\0& 1&-1 & 0 &-1 & 3&3 & 2&3 & 2&\end{array} \right).
\endaligned$$
$$\aligned& \tilde{F}(F(z_2)|_{\{1\}})|_{\emptyset}=\left(\begin{array}{cccccccccccc}  &1 &  -1 &0&1& -1& -1& -1& -1& -1& -1\\ 0&1&-1 &0&-1 & 3&3 & 2&3& 2\end{array} \right).
\endaligned$$
Then $$\aligned &F\left(\tilde{F}(F(z_2)|_{\emptyset})|_{\{1,2,3\}\setminus f(\emptyset)} + \tilde{F}(F(z_2)|_{\{1\}})|_{\{1,2,3\}\setminus f(\{1\})}  \right)|_{\emptyset}\\
&=F\left(\tilde{F}(F(z_2)|_{\emptyset})|_{\{1,2,3\}} + \tilde{F}(F(z_2)|_{\{1\}})|_{\emptyset}  \right)|_{\emptyset}\\
&=\tiny{\left(\begin{array}{cccccccccc} &1 &  -1&0& 1 &0&0 &0&0 &1   \\ 0&1&-1&0 &-1 &0&0&-1 &0 &-1  \end{array} \right)\left(\begin{array}{ccc} -1 &-1 &-1  \\ 3 &3 &2 \end{array} \right)^2\left(\begin{array}{cccccccc} -1 &-1&-1&-1&-1&-1&-1&-1   \\ 3 &2&3 &3 &2 &3 &2 &\end{array} \right)}\\
&=\tilde{F}(F(z_3)|_{\{1,2,3\}})|_{\emptyset}
\endaligned$$
\end{exmp}

\section{Rank 2 Cluster Algebra of affine type}\label{affine}
In this section, we would like to compare our formula with the known formula by Caldero and Zelevinsky \cite{CZ} among others \cite{SZ}, \cite{MP}. Fix $r=2$. 

\begin{defn}
Let $\{c_n\}$ be the sequence  defined by the recurrence relation $$c_n=2c_{n-1} -c_{n-2},$$ with the initial condition $c_1=0$, $c_2=1$. Then $c_n= n-1.$ \qed
\end{defn}

If we have a map $G : (x, y) \mapsto (xyx^{-1}, y^2x^{-1})$ and define $(z_n, w_n):=G^n(x, y)$, then it is easy to see that $z_n$ is one of the terms in $x_n$. Then we have
$$z_n=\begin{array}{c}\underbrace{\left(\begin{array}{ccccccc} \text{ }&1 & -1 & -1 & \cdots & -1& -1\\ 0&1& 1 & 1 & \cdots & 1 & \text{ }\end{array} \right)}\\ n+2\text{ columns}\end{array}.$$

When $r=2$, no exceptional string occurs, so we do not need $\text{Exc}$. When $r=2$, definition of $f$ can be reduced as follows.

\begin{defn}\label{def_of_f2}
We need a function $f$ from $\{\text{subsets of } [c_{n-1}]\}$ to $\{\text{subsets of } [c_{n}]\}$. For each subset $V\subset [c_{n-1}]$, we define $f(V)$ as follows.

If $V=\emptyset$ then $f(\emptyset)=\emptyset$. If $V\neq\emptyset$ then we write $V=\cup_{i=1}^j\{e_i,e_i+1,\cdots, e_i+l_i-1\}$ with $l_i>0$ $(1\leq i\leq j)$ and $e_i+l_i<e_{i+1}$ $(1\leq i\leq j-1)$. Then
$$
f(V):=\cup_{i=1}^j\{e_i,e_i+1,\cdots, e_i+l_i\}
$$
\qed\end{defn}

\begin{defn}\label{restrict_to_term2}
Let $$T=\sum_{\delta_1\in H_1}\cdots\sum_{\delta_{c_n}\in H_{c_n}} \left(\begin{array}{ccccccc} \text{ }&1 & \alpha_1& \alpha_2 & \cdots & \alpha_{c_n}& -1\\ 0&1& \beta_{1}-2\delta_{1} & \beta_{2}-2\delta_{2} & \cdots & \beta_{c_{n}}-2\delta_{c_n} & \text{ }\end{array} \right),$$ where each of $H_i$ is either $\{0\}$ or $\{0,1\}$. Let $V$ be any subset of $[c_n]$. Then we define
$$T|_V:=\left\{\begin{array}{ll} 0,  & \text{ if } H_i=\{0\} \text{ for some } i\in V \\
\text{ } & \text{ }\\
\begin{array}{l}\left(\begin{array}{ccccccc} \text{ }&1 & \alpha_1 & \alpha_2 & \cdots & \alpha_{c_n}& -1\\ 0&1& \beta_{1}-2\delta_{1} & \beta_{2}-2\delta_{2} & \cdots & \beta_{c_{n}}-2\delta_{c_n} & \text{ }\end{array} \right) \\\text{ where }\delta_{i}=1\text{ for }i\in V\text{ and }\delta_{i}=0\text{ for }i\not\in V, \end{array}
&  \text{ otherwise.}\end{array}\right.$$
Note that $T=\sum_{V\subset [c_n]} T|_V$.
\qed\end{defn}

It is easy to see that
$$F(z_{n-2})=\sum_{\delta_1=0}^1\cdots\sum_{\delta_{c_{n-1}}=0}^1\left(\begin{array}{ccccccc} \text{ }&1 & -1 & -1 & \cdots & -1& -1\\ 0&1& 1-2\delta_1 & 1-2\delta_2 & \cdots & 1-2\delta_{c_{n-1}} & \text{ }\end{array} \right).$$

\begin{defn}\label{def_of_tildeF2}
Let $$w={F}(z_{n-1})|_V= \left(\begin{array}{ccccccc} \text{ }&1 & -1 & -1 & \cdots & -1& -1\\ 0&1&1-2\delta_{1} & 1-2\delta_{2} & \cdots & 1-2\delta_{c_n} & \text{ }\end{array} \right),$$
where $\delta_{i}=1$ for $i\in V$ and $\delta_{i}=0$ for $i\not\in V$. We will define $\tilde{F}(w)$, which is a  modification of taking $F$, where we do not allow $(1+y^2)^{-1}$ to appear. In fact, if none of $1-2\delta_{i}$ is negative, then $\tilde{F}(w)=F(w)$. In general, $\tilde{F}(w)$ can be defined by letting
$$\tilde{F}(w)=\tilde{F}\left(\begin{array}{ccc} \text{ }&1 & -1 \\ 0&1& 1-2\delta_{1} \end{array} \right)\left[\prod_{i=2}^{c_n} \tilde{F}\left(\begin{array}{c} -1\\1-2\delta_{i} \end{array} \right)\right]\left(\begin{array}{c} -1\\ \text{ } \end{array} \right)$$
and using the following rules:
$$\tilde{F}\left(\begin{array}{ccc} \text{ }&1 & -1\\ 0&1 & 1-2\delta_1  \end{array} \right):=\sum_{\delta^{(1)}=0}^{1-\delta_{1}}\sum_{\delta^{(2)}=0}^{\Delta_{V,1}(1-\delta_{1})}\left(\begin{array}{cccc} \text{ }&1& -1& 2\delta_1-1\\ 0& 1& 2(1-\delta^{(1)})-1 & 2\Delta_{V,1}(1-\delta^{(2)})-1 \end{array}\right),$$
$$
\tilde{F}\left(\begin{array}{c} -1\\1-\delta_{i}r \end{array} \right):= \sum_{\delta=0}^{\Delta_{V,i}(1-\delta_{i})}\left( \begin{array}{c}  2\delta_i-1 \\  2\Delta_{V,i}(1-\delta) -1\end{array} \right) \,\,\,\,\,\,\,\,\, \text{ for }1<i\leq c_n,
$$
where 
$\Delta_{V,i}=\left\{\begin{array}{ll} 0, &\text{ if }\delta_i=0 \text{ and }\delta_{i+1}=1\\ 
1, & \text{ otherwise}.    \end{array} \right.$
\end{defn}

\begin{thm}\label{mainthm2}
We have
\begin{equation}\label{maineq2}\aligned x_n&=\sum_{V\subset [n-2]} \left(\sum_{W\subset [n-1]\setminus f(V)} \tilde{F}({F}(z_{n-2})|_V)|_W\right)=\sum_{V\subset [n-2]} \left( \tilde{F}({F}(z_{n-2})|_V)\right)\\&=\sum_{\alpha_i,\beta_i} \left(\begin{array}{ccccccc} \text{ }&1 &  \alpha_1 & \alpha_2 & \cdots & \alpha_{n-1}& -1\\ 0&1& \beta_{1} & \beta_{2} & \cdots & \beta_{n-1} & \text{ }\end{array} \right), \endaligned \end{equation}
where the summation runs over $\alpha_i,\beta_i$ satisfying the following:\\
$$\aligned \alpha_i=1\text{ or }&-1, \,\, \beta_i=1\text{ or }-1,\\&\alpha_1=-1,\\ \text{ if }\alpha_i=1&\text{ then }\beta_i=-1,\\ \text{ and if }\alpha_i=-1\text{ and }&\alpha_{i+1}=1\text{ then }\beta_i=-1. \endaligned$$
\end{thm}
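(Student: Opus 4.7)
The first equality in (\ref{maineq2}) is the direct specialization of Theorem~\ref{mainthm} to $r = 2$ (using the reduced Definitions~\ref{def_of_f2}, \ref{restrict_to_term2}, \ref{def_of_tildeF2}), so the substantive task is to prove the second equality, which rewrites $\sum_{V \subset [n-2]} \tilde{F}(F(z_{n-2})|_V)$ as an explicit sum over $(\alpha,\beta)$-tuples satisfying the stated conditions. My plan is to exhibit a bijection between the parameter set of the left-hand side and the set of such tuples on the right, and verify that the associated monomials match column-by-column.

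A term of the left-hand side is parametrized by a pair $(V, \Phi)$ where $V \subset [n-2]$ and $\Phi = (\delta^{(j)})$ ranges over the free summation variables appearing in Definition~\ref{def_of_tildeF2}. Setting $\delta_i = \mathbf{1}_{i \in V}$, one reads off $\alpha_1 = -1$ and $\alpha_{i+1} = 2\delta_i - 1$ for $i = 1, \ldots, n-2$; in particular each $\alpha$ lies in $\{\pm 1\}$ and is determined entirely by $V$, with $\alpha_{i+1} = 1$ iff $i \in V$. Each $\beta_i$ is given by $2(1-\delta^{(j)}) - 1$ (in the first block) or $2\Delta_{V, i-1}(1 - \delta^{(j)}) - 1$ (for subsequent columns), so also lies in $\{-1, 1\}$.

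The crux of the argument is a case analysis of when each $\beta_i$ is forced versus free. Inspecting Definition~\ref{def_of_tildeF2}: (i) if $\delta_{i-1} = 1$ (equivalently $\alpha_i = 1$), the summation range of the relevant $\delta^{(j)}$ collapses and $\beta_i = -1$ is forced; (ii) if $\delta_{i-1} = 0$ and $\delta_i = 1$ (equivalently $\alpha_i = -1$ and $\alpha_{i+1} = 1$), then $\Delta_{V, i-1} = 0$ and $\beta_i = -1$ is again forced; (iii) otherwise, $\delta^{(j)} \in \{0,1\}$ is genuinely free and $\beta_i$ independently takes both values in $\{-1, 1\}$. Cases (i) and (ii) correspond exactly to the two conditions ``if $\alpha_i = 1$ then $\beta_i = -1$'' and ``if $\alpha_i = -1$ and $\alpha_{i+1} = 1$ then $\beta_i = -1$'' in the theorem statement.

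The bijection is now transparent: given $(\alpha, \beta)$ satisfying the theorem's conditions, define $V := \{i \in [n-2] : \alpha_{i+1} = 1\}$, and for each free $\beta_i$ recover the corresponding $\delta^{(j)}$ via $\beta_i = 2(1 - \delta^{(j)}) - 1$. Every such tuple arises from a unique $(V, \Phi)$ and the associated monomials agree column-by-column, so the two sides of the second equality in (\ref{maineq2}) agree term-by-term. The trivial base cases $n = 1, 2$ are checked by hand. The main obstacle is the bookkeeping in case analysis step~(iii)---tracking the interplay between the bound involving $\delta_i$ and the factor $\Delta_{V,i}$---but once this is sorted out the verification becomes routine.
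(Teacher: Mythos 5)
Your proposal is correct and follows essentially the route the paper intends: Theorem~\ref{mainthm2} is the $r=2$ specialization of Theorem~\ref{mainthm} (with Definitions~\ref{def_of_f2}--\ref{def_of_tildeF2} as the reduced forms of the general definitions), and the explicit $(\alpha_i,\beta_i)$ description is obtained exactly by your unwinding of Definition~\ref{def_of_tildeF2}, matching which $\beta_i$ are forced ($\alpha_i=1$, or $\alpha_i=-1$ with $\alpha_{i+1}=1$, via the collapsed range and $\Delta_{V,i}=0$) against which are free. Your reading of the forced value as $\beta_i=-1$ is the one consistent with the paper's examples and with the Caldero--Zelevinsky comparison, so the bijection argument goes through as you describe.
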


Let us compare our formula with the known formula by Caldero and Zelevinsky \cite{CZ}. Let $j(U)$ denote the number of connected components of a subset $U\subset [n-1]\setminus\{1\}$, i.e.  $U=\cup_{i=1}^{j(U)}\{e_i,e_i+1,\cdots, e_i+l_i-1\}$ with $l_i>0$ $(1\leq i\leq j(U))$ and $e_i+l_i<e_{i+1}$ $(1\leq i\leq j(U)-1)$. In \cite[Proposition 4.3]{CZ}, it is shown that the number of $p$-element subsets $U\subset [n-1]\setminus\{1\}$ with $j(U)=t$ is equal to ${{p-1}\choose{t-1}}{{n-1-p}\choose{t}}$. For each $U\subset [n-1]\setminus\{1\}$, if we let $\alpha_i=1$ for $i\in U$ and $\alpha_i=-1$ for $i\not\in U$, then $|U|(=p)$ determines the degree of $x$ in the commutative case. The degree of $x$ is equal to $2p-n+1$. Let $t=j(U)$. If $U=\emptyset$ then $\beta_i=1$ or $-1$ for any $i\in[n-1]$.  If $U\neq\emptyset$ then only for $i\in [n-1]\setminus (U\cup \{e_i-1\}_{1\leq i\leq t})$, we have choices, namely $\beta_i=1$ or $-1$.  If $|\{i\,|\,\beta_i=1\}|=q$ then the degree of $y$ is equal to $2q-n+2$. Since $|[n-1]\setminus (U\cup \{e_i-1\}_{1\leq i\leq t})|= n-1-p-t$ for $U\neq\emptyset$, the specialization of (\ref{maineq2}) at $xy=yx$ is equal to
$$\tiny{\aligned
&\sum_q {{n-1}\choose q}x^{-n+1}y^{2q-n+2} + \sum_{p\geq 1} \sum_{q,t} {{n-1-p-t}\choose {q}}{{p-1}\choose{t-1}}{{n-1-p}\choose{t}}x^{2p-n+1}y^{2q-n+2}\\
&=\sum_q {{n-1}\choose q}x^{-n+1}y^{2q-n+2} +\sum_{p\geq 1} \sum_{q,t}{{n-1-p-t}\choose {q}}{{n-1-p}\choose{n-1-p-t}}{{p-1}\choose{t-1}}x^{2p-n+1}y^{2q-n+2}\\
&=\sum_q {{n-1}\choose q}x^{-n+1}y^{2q-n+2} +\sum_{p\geq 1} \sum_{q,t}{{n-1-p-q}\choose {n-1-p-t-q}}{{n-1-p}\choose{q}}{{p-1}\choose{t-1}}x^{2p-n+1}y^{2q-n+2}\\
&=\sum_q {{n-1}\choose q}x^{-n+1}y^{2q-n+2} +\sum_{p\geq 1} \sum_{q}{{n-1-p}\choose{q}}\sum_t {{n-1-p-q}\choose {n-1-p-t-q}}{{p-1}\choose{t-1}}x^{2p-n+1}y^{2q-n+2}\\
&=\sum_q {{n-1}\choose q}x^{-n+1}y^{2q-n+2} +\sum_{p\geq 1} \sum_{q}{{n-1-p}\choose{q}} {{n-2-q}\choose {n-2-p-q}}x^{2p-n+1}y^{2q-n+2}\\
&=\sum_q {{n-1}\choose q}x^{-n+1}y^{2q-n+2} +\sum_{p\geq 1} \sum_{q} {{n-2-q}\choose {p}}{{n-1-p}\choose{q}}x^{2p-n+1}y^{2q-n+2},
\endaligned}$$
which is precisely the same as \cite[Theorem 4.1 (4.3)]{CZ}.

\end{document}